\newtheorem{thm}{Theorem}[section]
\newtheorem{lem}[thm]{Lemma}
\newtheorem{prop}[thm]{Proposition}
\newtheorem{lemma}[thm]{Lemma}
\theoremstyle{definition}
\theoremstyle{remark}
\newtheorem{re}[thm]{Remark}
\numberwithin{equation}{section}
\theoremstyle{definition}
\newtheorem{definition}{Definition}[section]
\theoremstyle{remark}
\newcounter{mnote}
\newcommand{\tran}{^{\mathstrut\scriptscriptstyle\top}} 
\newcommand{\RN}[1]{%
  \textup{\uppercase\expandafter{\romannumeral#1}}%
}
\newcommand{\Rey}{\text{Re}}
\newcommand{\X}{\mathbb{X}_{t}}
\newcommand{\XX}{\mathbb{X}} 
\newcommand{\E}{\mathbb{E}}
\newcommand{\grad}{\nabla}
\newcommand{\tU}{\widetilde U}
\newcommand{\tRe}{\widetilde{\text{Re}}}
\begin{document}
\subjclass[2010]{35Q30, 76F10}
\keywords{Navier-Stokes equations, Shear flows}

\title{Three-dimensional shear driven turbulence with noise at the boundary}

\author{Wai-Tong Louis Fan}
\address{Department of Mathematics, Indiana University Bloomington, IN 47405, USA}
\email{waifan@iu.edu}
\author{Michael Jolly}
\address{Department of Mathematics, Indiana University Bloomington, IN 47405, USA}
\email{msjolly@indiana.edu}
\author{Ali Pakzad}
\address{Department of Mathematics, Indiana University Bloomington, IN 47405, USA}
\email{apakzad@iu.edu}

\maketitle
\setcounter{tocdepth}{1}

\begin{abstract}
We consider the incompressible 3D Navier-Stokes equations subject to a shear induced by noisy movement of part of the boundary. The effect of the noise is quantified by upper bounds on the first two moments 
of the dissipation rate. 
The expected value estimate is consistent 
with the Kolmogorov dissipation law, recovering an upper bound as in \cite{DC92} for the deterministic case. The movement of the boundary is given by an Ornstein--Uhlenbeck process;
a potential for over-dissipation is noted if the Ornstein--Uhlenbeck process were replaced by the Wiener process.
\end{abstract}

\section{INTRODUCTION}

Noise is added to turbulence models for a variety of reasons, both practical and theoretical. For example, the onset of turbulence is often related to the randomness of background movement \cite{MR04}. In any turbulent flow there are unavoidably perturbations in boundary conditions and material properties; see \cite[Chapter 3]{P00}. The addition of noise in a physical model can be interpreted as a perturbation from the model. There is considerable evidence supporting the stabilization of solutions by noise (see, e.g., \cite{B01,CLM01,FSQD19,K99}). However, the effect of noise in turbulent flow is far from completely understood.

This paper concerns the Kolmogorov dissipation law associated with the  incompressible  Navier-Stokes equations (NSE) in a 3-dimensional box $D = (0,L)^2 \times (0,h)$ subject to a shear induced by noisy movement of one wall.  Specifically, we consider the following 
differential equation,
\begin{equation} \label{SNSE}
\begin{split}
 d u +  ( u \cdot \nabla u - & \nu \Delta u + \nabla p  ) \, dt= 0,\\
& \grad \cdot u  =0   ,
\end{split}
\end{equation}
with $L$-periodic boundary condition in the $x_1$ and $x_2$ directions and a random boundary condition  given by the following:  for all time $t\in \mathbb{R}_+$ and $(x_1,x_2)\in (0,L)^2$,
\begin{equation} \label{BC}
\begin{split}
   u(x_1,x_2,0,t)=(\X,0,0)\tran \hspace{18pt} &\mbox{and}\hspace{18pt} u(x_1,x_2,h,t)=(0,0,0)\tran\;.
\end{split}
\end{equation}
In the above, $\nu>0$ is a fixed real parameter representing the viscosity, and
$\XX=(\X)_{t\in \mathbb{R}_+}$ is a given continuous-time, real-valued stochastic process.
The stochastic processes  
$u$ and $p$
represent respectively the velocity field and the pressure.


The Kolmogorov dissipation law is tied to a phenomenon in turbulence called the energy cascade, which can be explained in 3 main steps. $1-$ In the absence of a body force, the kinetic energy is introduced into the large scales of the fluid between the parallel plates by the effects of the moving plate. This energy is called \textit{energy input}. $2-$  The large eddies break up into smaller eddies through vortex stretching over an \textit{intermediate range}, where the energy is transferred to smaller scales and the energy dissipation due to the viscous force is negligible. $3-$ At small enough scales (expected to be $ \sim \Rey^{-3/4}$, where $\Rey$ is the Reynolds number defined in \eqref{upbnd}) \textit{dissipation dominates} and the energy in those smallest scales decays to zero exponentially fast. 

Based on the  above description the  dissipation is effective  at the end of a sequence of processes. Therefore, the rate of dissipation, which measures the amount of energy lost by the viscous force, is determined by the first process in the sequence, which
is the energy input.  The persistent force driving the shear flow is the motion of the bottom wall $\{(x_1,x_2,0):\,(x_1,x_2)\in [0,L]^2\}$.
The time averaged energy dissipation rate must balance the drag exerted by the walls on the fluid. In terms of the characteristic speed $U$, the large eddies have energy of order $U^2$ and time scale $\tau=h/U$, so the rate of energy input can be scaled as $U^2/\tau =U^3/h$. This suggests the Kolmogorov dissipation law for time-averaged energy dissipation rate $ \varepsilon$ (Kolmogorov 1941);
$$  \varepsilon  \sim \frac{U^3}{h}.$$
Here $a \sim b$ means  $a\lesssim b$ and $b\lesssim a$;  $a\lesssim b$ means $a\le c\,b$ for a nondimensional universal constant $c$.

The energy dissipation rate  has been widely studied  in the literature in the deterministic case \cite{B70,DLPRSZ18,DF02,DR00,H72,L02,L16,AP17,AP19,AP20}.   Doering and Constantin proved in \cite{DC92} a rigorous asymptotic bound  directly from the Navier-Stokes equations. Their bound is of the form 
\begin{align}\label{upbnd}  \varepsilon  \lesssim \frac{U^3}{h}\;, \quad  \text{as} \ \Rey \to \infty, \quad \text{where} \quad  \Rey= \frac{Uh}{\nu},\;
\end{align}
similar estimations have been proven by Kerswell \cite{K97},  Marchiano \cite{M94}, and Wang \cite{W97} in more generality. 

In this paper we 
choose $\X$ to be an Ornstein--Uhlenbeck process (OU process) satisfying \eqref{OU}. We
derive an upper bound on the expected value of the energy dissipation rate as well as its second moment in terms of characteristics of the randomly moving  bottom wall. 
Our estimate recovers \eqref{upbnd} in the limit as the variance $\sigma^2$ of the noise tends to 0.  The key to the analysis is the choice of a stochastic background flow and the treatment of a stochastic integral (with respect to the Wiener process) as a local martingale.

Since the work of Bensoussan and Temam \cite{BT73} in 1973, there has been substantial advance in understanding the stochastic Navier-Stokes equations,  see for example \cite{BCPW19,B00,BP00,MR04,MR05,WW15} and the references therein. Recently in \cite{CP20}, the exact dissipation rate is obtained for the stochastically forced Navier-Stokes equations under an assumption of energy balance.  In all those works the equation always contains noise as a forcing term. Other than the analysis of symmetries of a passive scalar advected by a shear flow in which a boundary moves as a stochastic process in \cite{CamassaKilicMcLaughlin19}, to the best of our knowledge,  there is no other work concerning  the equations of the motion with stochastic boundary conditions. 


\subsection*{Organization of this paper} In section 2, we will introduce the necessary notation and preliminary results needed in the proceeding sections. In section 3, we will state the main result of this work. We will set up an almost sure bound starting from the energy equation in section 4. From there, we will derive an upper bound on the mean value and variance of the energy dissipation respectively in sections 5 and 6.  The concluding Section 7 contains some open problems in this direction.

\section{Definitions and  Notations}

In this paper, we choose $\X$ to be an \textbf{\textit{Ornstein--Uhlenbeck process}} (OU process), which is a diffusion process solving the It\^o stochastic differential equation
\begin{equation}\label{OU}
d\X = \theta (U - \X) dt + \sigma d W_t,
\end{equation}
where $W=(W_t)_{t\in\mathbb{R}_+}$ is 
a standard Brownian motion (a.k.a. the Wiener process), and $\theta >0$ and $\sigma >0$ are parameters. A strong solution to \eqref{OU} is given by
\begin{align*}
\X  = \mathbb{X}_0\,e^{-\theta t} + U\,(1-e^{-\theta t}) + \sigma \int_0^t e^{-\theta (t-s)}\, dW_s.
\end{align*}
It is well known that $\X$ has stationary distribution given by the normal distribution $\mathcal{N}(U, \frac{\sigma^2}{2 \theta})$ with mean $U$ and variance $\frac{\sigma^2}{2\theta}$.  If the initial distribution satisfies $\XX_0  \sim \mathcal{N}(U, \frac{\sigma^2}{2 \theta})$, then $\X \sim N(U, \frac{\sigma^2}{2 \theta})$ for all $t\geq 0$ and we say $\XX$ is a {\it stationary OU} process.


Intuitively,  the OU process is a  Wiener process plus a tendency to move towards a location $U$, where the tendency is greater when the process is further away from that location. 
In (\ref{OU}), $\theta$ is the decay-rate which measures how strongly the system reacts to perturbations, and  $\sigma ^2$ is the variation or the size of the noise. 
We will need the following basic properties of the stationary OU process
(for a proof and additional properties see \cite{Doob42}).


\begin{prop}\label{OUProperties}
Let $\X$ be a stationary Ornstein--Uhlenbeck process satisfying \eqref{OU}. The following hold for all $t\geq 0$.
\begin{enumerate}[(i)]
\item  $\X \sim N(U, \frac{\sigma^2}{2 \theta})$\;,
\item  $[\XX]_t = \sigma^2\,t$, where $[\XX]_t$ is the quadratic variation of $\XX$ on $[0,t]$.
\end{enumerate}
\end{prop}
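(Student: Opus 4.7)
The plan is to work directly from the explicit strong solution
\[
\X = \XX_0\, e^{-\theta t} + U(1-e^{-\theta t}) + \sigma \int_0^t e^{-\theta(t-s)}\, dW_s
\]
already displayed in the paper. For part (i), I would first observe that the Wiener integral $I_t := \sigma \int_0^t e^{-\theta(t-s)}\,dW_s$ is a Gaussian random variable, since the integrand $s \mapsto \sigma e^{-\theta(t-s)}\,\mathbf{1}_{[0,t]}(s)$ is deterministic and square-integrable. By the It\^o isometry,
\[
\mathbb{E}[I_t] = 0, \qquad \mathrm{Var}(I_t) = \sigma^2 \int_0^t e^{-2\theta(t-s)}\,ds = \frac{\sigma^2(1-e^{-2\theta t})}{2\theta}.
\]
Then, assuming $\XX_0 \sim \mathcal{N}(U, \sigma^2/(2\theta))$ is independent of the driving Brownian motion $W$ (which is implicit in the notion of a strong solution to \eqref{OU} with a prescribed stationary initial law), the random variable $\X$ is a linear combination of two independent Gaussians plus a deterministic constant, hence Gaussian. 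It remains to compute its mean and variance:
\[
\mathbb{E}[\X] = U e^{-\theta t} + U(1-e^{-\theta t}) + 0 = U,
\]
\[
\mathrm{Var}(\X) = e^{-2\theta t}\cdot \frac{\sigma^2}{2\theta} + \frac{\sigma^2(1-e^{-2\theta t})}{2\theta} = \frac{\sigma^2}{2\theta},
\]
and these are exactly independent of $t$, as required.

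For part (ii), the cleanest route is the defining SDE \eqref{OU} rather than the explicit solution. The drift $\theta(U-\X)\,dt$ is of finite variation on every compact interval (as a Lebesgue integral of a continuous adapted process), so it contributes nothing to the quadratic variation. The martingale part is $\sigma W_t$, whose quadratic variation is $\sigma^2 [W]_t = \sigma^2 t$. Therefore $[\XX]_t = \sigma^2 t$. (Equivalently, one could invoke the general formula $[X]_t = \int_0^t b(s)^2\,d[W]_s$ for an It\^o process $dX = a\,dt + b\,dW$, applied with $b \equiv \sigma$.)

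The proof is essentially routine; the only mild subtlety is bookkeeping the independence of $\XX_0$ and $W$ to ensure the variances add rather than picking up a cross-covariance term. I do not foresee any significant obstacle. For completeness I would cite \cite{Doob42} as the paper already does, since both claims are standard and the proof is included only to make the exposition self-contained.
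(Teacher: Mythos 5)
Your proof is correct. Note, however, that the paper does not actually prove Proposition \ref{OUProperties}: it states the two properties and defers entirely to the citation of Doob's 1942 paper, so there is no in-text argument to compare yours against. Your write-up supplies exactly the standard argument one would expect that reference to contain: for (i), the Wiener integral $\sigma\int_0^t e^{-\theta(t-s)}\,dW_s$ is a centered Gaussian with variance $\frac{\sigma^2}{2\theta}(1-e^{-2\theta t})$ by the It\^o isometry, and combining it with an independent $\XX_0\sim\mathcal{N}(U,\tfrac{\sigma^2}{2\theta})$ makes the exponential factors cancel in both mean and variance; for (ii), the finite-variation drift contributes nothing to the quadratic variation, leaving $[\XX]_t=[\sigma W]_t=\sigma^2 t$. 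The one point you rightly flag --- that $\XX_0$ must be taken independent of the driving Brownian motion so that the variances add --- is indeed the only place where care is needed, and it is the standard convention for constructing a stationary solution (it is implicit in the paper's phrase ``if the initial distribution satisfies $\XX_0\sim\mathcal{N}(U,\frac{\sigma^2}{2\theta})$''). Your proof would serve as a self-contained replacement for the citation.
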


Throughout this manuscript, the $L^2(D)$  norm and inner product will be denoted by $\|\cdot\|$ and $( \cdot ,  \cdot)$  respectively.
For the sake of boundary conditions, we consider 
\begin{align*}
    H=\{v\in [L^2(D)]^3: \,   \nabla  \cdot v =0,\,  v(x_1,x_2,0)=v(x_1,x_2,h)=0, \,  v \text{ periodic in } x_1,x_2 \},\\
V=\{v\in [H^1(D)]^3: \, \nabla \cdot v =0, \,  v(x_1,x_2,0)=v(x_1,x_2,h)=0, \,  v \text{ periodic in } x_1,x_2 \},\\
C^{\infty}_{\text{div}}=\{v\in [C^{\infty}(D)]^3: \, \nabla \cdot v =0, \, v(x_1,x_2,0)=v(x_1,x_2,h)=0, \,  v \text{ periodic in } x_1,x_2 \}.
\end{align*}





\medskip

\subsection*{Stochastic Background Flow}
The difficulty in the analysis of the shear flow (\ref{BC}) is due to the effect of the random inhomogeneous boundary condition. We overcome this difficulty by constructing a carefully chosen stochastic background flow. This construction is based on the Hopf extension \cite{H55}.

Our key idea here is to choose the boundary layer thickness $\delta_t$ in the background flow to be random and time-dependent, namely,
\begin{equation}\label{delta}
\delta_t = \delta(\X(\omega)) 
= \frac{A}{|\X(\omega)|^2 + B}
\end{equation}
where $\delta:\,\mathbb{R}\to (0,\infty)$ is the function $\delta(z)=\frac{A}{z^2+B}$.
We later choose $A=\nu U$ and $B=U^2$, so $\delta_t$ has the dimension of length and
$\delta_t\in (0,h)$ if $\Rey = \frac{U\, h}{\nu} > 1$; see Lemma \ref{L:IneqForDelta} for  precise requirements.


We then let $\phi:\,[0,h]\times\mathbb{R}\to \mathbb{R}$ be the function 
\[
\phi(a,z)= 	\left(1- \dfrac{a}{\delta(z)}\right)z\,1_{\{ 0\leq a\leq \delta(z) \}}.
\]
By definition, we have (see Figure \ref{fig:phi})
\begin{equation} \label{Background}
\phi(x_3,\X(\omega)) =
\left\{
	\begin{array}{ll}
		\left(1- \dfrac{x_3}{\delta_t}\right) \X(\omega)  & \mbox{if } 0\leq x_3\leq \delta_t  \\
		0 &  \mbox{if } \delta_t\leq x_3\leq h
	\end{array}.
\right.
\end{equation}
\begin{figure}[t!]
    \centering
    \includegraphics[scale=0.3]{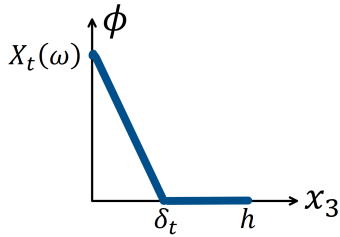}
    \caption{The graph of $x_3 \mapsto \phi(x_3,\X(\omega))$, where $\delta_t=\delta(\X(\omega))$ is the boundary layer thickness.}
    \label{fig:phi}
\end{figure}
Finally, we define the \textbf{stochastic background flow} $\Phi=\Phi_t(x_1,x_2,x_3;\omega)$ as 
\begin{equation}\label{Background-vec}
\Phi_t(x_1,x_2,x_3;\omega) := \big(\phi(x_3,\X(\omega)) ,\;0,\;0\big)\tran.
\end{equation}
There can be other choices for the function $\delta_t$, and our choice in (\ref{delta}) is motivated by the general analysis  in \eqref{Eq2_v2}.  The \textbf{boundary layer} is denoted by $D_{\delta} = (0, L)^2 \times (0, \delta_t)$.


\medskip

\subsection*{Martingale solutions}
We follow the standard notion of martingale solutions for stochastic Naiver Stokes equations such as Flandoli and Gatarek \cite[Definition 3.1]{FG95}, 
and define a martingale solution for our system \eqref{SNSE}-\eqref{BC}.
This notion is a  probabilistically weak analogue of the Leray-Hopf weak solution to the deterministic Navier–Stokes equations.

\begin{definition}[Martingale solution on compact intervals]
\label{MgaleCmpct}
Let $T\in [0,\infty)$. A martingale solution to \eqref{SNSE}-\eqref{BC}
on $[0, T ]$ consists of a stochastic basis $\big(\Omega,\, (\mathcal{F}_t)_{t\in[0,T]}, \,\mathbb{P}\big)$
with a complete right-continuous filtration $(\mathcal{F}_t)_{t\in[0,T]}$, a stationary OU process $(\X)_{t\in [0,T]}$ adapted to $(\mathcal{F}_t)_{t\in[0,T]}$, and with mean $U$ and variance $\frac{\sigma^2}{2\theta}$,
and an $\mathcal{F}_t$-progressively measurable stochastic process
\[
u:\; [0,T] \times \Omega  \rightarrow  [L^2(D)]^3
\]
such that 
\begin{itemize}

\item 
 $u-\Phi$  has sample paths in $  L^{2}\left([0,T];\,V\right) \cap L^{\infty}\left([0,T];\,H\right)$  almost surely, 
    \item for all $t\in [0,T]$ and all   $\varphi \in C^{\infty}_{\text{div}}$, the following identity holds almost surely,
    \begin{equation}\label{ByParts}
        (u (t) , \varphi) + \nu \int_0^t (\nabla u(s) , \nabla \varphi) \, ds + \int_0^t (u(s) \cdot \nabla u(s) , \varphi) \, ds = (u(0), \varphi),
    \end{equation}

 \item the following holds
 \begin{equation}\label{Condition1u}
 \mathbb{E} \left[  \sup_{s \in [0,T]} \|u (s)\|^2 + \int_0^T  \|\nabla u(s)\|^2\, dt \right]\, < \infty.
 \end{equation}
\end{itemize}
\end{definition}

\begin{re}
The existence of a martingale  solution under the current assumptions can be derived by modifying a classical result of Flandoli and Gatarek \cite{FG95} in the case when $\X = 0$. As in the deterministic case, the uniqueness of
such solutions is an open problem.  
\end{re}

\begin{re}
Note that above solution is independent of the choice of $\Phi$  and depends only on the value of $\Phi$ on the boundary; see for instance \cite[Chapter 9]{BH10}.
\end{re}

Essentially, a global solution has a fixed stochastic basis over $[0,\infty)$
which, when restricted to $[0,T]$, yields a solution as in Definition \ref{MgaleCmpct}.

\begin{definition}[Martingale solution]\label{Def:MtgSol}
A martingale solution to \eqref{SNSE}-\eqref{BC}  consists of a stochastic basis $\big(\Omega,\, (\mathcal{F}_t)_{t\in[0,\infty)}, \,\mathbb{P}\big)$
with a complete right-continuous filtration $(\mathcal{F}_t)_{t\in \mathbb{R}_+}$,  a stationary OU process $\XX=(\X)_{t\in \mathbb{R}_+}$ with mean $U$ and variance $\frac{\sigma^2}{2\theta}$,
and an $\mathcal{F}_t$-progressively measurable stochastic process
\[
u \in [0,\infty) \times \Omega \rightarrow [L^2(D)]^3
\]
such that $\left\{
\big(\Omega,\, (\mathcal{F}_t)_{t\in[0,T]}, \,\mathbb{P}\big),\;(\X)_{t\in [0,T]},\;
u\big|_{[0,T]\times\Omega} \right\}$ is a martingale solution to \eqref{SNSE}-\eqref{BC}  on $[0,T]$ for all $T\in [0,\infty)$.
\end{definition}


\medskip

\subsection*{Energy dissipation rate }
In experiments, it is natural to take a long, but fixed time interval $[0,T]$ and compute the time-average 
\begin{equation}\label{Def:timeaverageT}
\langle \epsilon\rangle_T:=\frac{1}{|D|}\,\frac{1}{T}\int_{0}^{T}   \nu\|\nabla u(t ,\cdot, \omega)\|_{L^2}^2  \, dt \;.
\end{equation}
It is shown in \cite{FJMRT} that the effect of $T$ in finite-time averages of physical quantities in turbulence theory, including the energy dissipation rate, can be controlled by parameters such as $\Rey$.   In our setting, this finite-time average in \eqref{Def:timeaverageT}  is a random variable whose mathematical expectation can be approximated by taking an average over a number of samples in the experiments. 
\begin{definition}\label{Def:dissipation}
We take the  time-averaged expected energy dissipation rate for a martingale solution $u$ of \eqref{SNSE}-\eqref{BC} to be defined by
\begin{align}\label{varepsdef}
\varepsilon  \coloneqq \limsup\limits_{T\rightarrow\infty}\mathbb{E}[\langle \epsilon\rangle_T]
=\limsup\limits_{T\rightarrow\infty}\mathbb{E}\left[ \frac{1}{|D|}\,\frac{1}{T}\int_{0}^{T}   \nu\|\nabla u(t ,\cdot, \omega)\|_{L^2}^2  \, dt\right]\;.
\end{align}
\end{definition}
Our main result, Theorem \ref{T:main} below, is an upper bound for $\varepsilon$ in terms of the characteristics of the noise added to the movement of the boundary. 
The variance $Var[\langle \epsilon\rangle_T]$ is bounded by the second moment
$\mathbb{E}[\langle \epsilon\rangle_T^2]$. In this work, we obtain an upper  bound for the limsup of $\mathbb{E}[\langle \epsilon\rangle_T^2]$.  Our method can readily be generalized to give an upper bound for the $p$-th moment for all $p\geq 1$; see Remark \ref{Rk:higher}.


\begin{re}
We note that by
Fatou's lemma
$$\limsup\limits_{T\rightarrow\infty}\mathbb{E}[\langle \epsilon\rangle_T] \leq \mathbb{E}\left[\limsup\limits_{T\rightarrow\infty}\langle \epsilon\rangle_T \right].  $$
Hence our upper bound on $\varepsilon$ defined in  \eqref{varepsdef} does not imply one when the order of the lim sup and expectation are reversed.
\end{re}

\section{Statement of the Results}
\begin{thm}\label{T:main}
Suppose $\left\{
\big(\Omega,\, (\mathcal{F}_t)_{t\in[0,\infty)}, \,\mathbb{P}\big),\;\XX,\;
u \right\}$ is a martingale solution to \eqref{SNSE}-\eqref{BC}, where $\XX$ is a stationary Ornstein--Uhlenbeck process \eqref{OU}. Assume that $\Rey = \frac{U\, h}{\nu} > 1$ and that the initial condition $u(0)$  is such that $\mathbb{E} [\|u(0)\|^2] <\infty$.  
Then the energy dissipation rate \eqref{varepsdef} satisfies
\begin{align}\label{expvalbound}
\varepsilon =\limsup\limits_{T\rightarrow\infty}\mathbb{E}[\langle \epsilon\rangle_T]  
\leq 32 \,  \frac{U^3}{h} + 2\left( 6  \, \frac{1}{\Rey} + 28 \, \frac{U}{ h \, \theta} + 12\, \frac{1}{\Rey^2} \,  \frac{h \, \theta}{U} +  24 \frac{1}{\Rey^2} \, \frac{h \, \sigma^2}{U^3} + 6\, \frac{\sigma^2}{h \, U \, \theta^2}\right)\, \sigma^2.
\end{align}
Moreover, the second moment  of $\langle \epsilon\rangle_T$ satisfies
\begin{equation}
\begin{aligned}\label{variancebound}
\limsup\limits_{T\rightarrow\infty}\mathbb{E}[\langle \epsilon\rangle_T^{2}]   
\lesssim
\frac{U^{6}}{h^2} + \sigma^2 \,P(\sigma)
\end{aligned}
\end{equation}
where $P(\sigma)=P_{U,\nu,\theta}(\sigma)$ is an explicit polynomial in $\sigma$ whose coefficients are explicit functions of $U,\,\nu$ and $\theta$.
\end{thm}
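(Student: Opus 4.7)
The plan is to extend the Doering--Constantin background flow method to the stochastic setting. First I would decompose $u = v + \Phi$ with $\Phi$ the stochastic background flow defined in \eqref{Background-vec} and $v = u - \Phi \in V$. Because $\Phi$ depends on the Itô process $\mathbb{X}_t$, combining the weak formulation \eqref{ByParts} with an application of Itô's formula to $\tfrac{1}{2}\|u\|^2$ should yield a semi-martingale decomposition of the schematic form
\begin{equation*}
d\bigl(\tfrac{1}{2}\|u\|^2\bigr) + \nu\|\nabla u\|^2\, dt = \mathcal{I}(v,\Phi)\,dt + \mathcal{Q}_t\,dt + \mathcal{M}_t\, dW_t,
\end{equation*}
where $\mathcal{I}$ collects the background-flow cross terms such as $(v\cdot\nabla\Phi,v)$, $(\Phi\cdot\nabla v,v)$ and $\nu(\nabla v,\nabla\Phi)$; the correction $\mathcal{Q}_t$ arises from $d[\mathbb{X}]_t = \sigma^2\, dt$ via Proposition \ref{OUProperties}(ii); and $\mathcal{M}_t\, dW_t$ is an Itô integral. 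The analytic heart of the argument is to control the trilinear term by Hardy's inequality on the thin layer $D_\delta$, giving a bound of the form $|(v\cdot\nabla\Phi,v)| \lesssim \delta_t\,\mathbb{X}_t^2\,\|\nabla v\|^2/\nu$; this is precisely why the choice \eqref{delta} with $A=\nu U,\,B=U^2$ is made, as it forces $\delta_t\,\mathbb{X}_t^2 \le \nu U$ almost surely and thus allows absorption of the trilinear term into the viscous dissipation with constants uniform in $\omega$. The residual drift $\nu\|\nabla\Phi\|^2$ is computed explicitly, scales like $\mathbb{X}_t^2/\delta_t$, and under the stationary law $\mathcal{N}(U,\sigma^2/(2\theta))$ produces the leading $U^3/h$ together with the $\sigma^2$-corrections of \eqref{expvalbound}.

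Starting from the resulting almost-sure bound (Section 4 in the outline), I would prove \eqref{expvalbound} by integrating on $[0,T]$, localising with stopping times $\tau_n\nearrow\infty$ so that $\int_0^{T\wedge\tau_n}\mathcal{M}_s\,dW_s$ is a genuine mean-zero martingale, taking expectations, and invoking \eqref{Condition1u} together with dominated convergence to pass $n\to\infty$. Dividing by $T$ and sending $T\to\infty$ kills the term $\mathbb{E}[\|u(T)\|^2-\|u(0)\|^2]/T$ and, by stationarity, reduces the surviving drift integrals to polynomial moments of $\mathbb{X}_0$ under $\mathcal{N}(U,\sigma^2/(2\theta))$; evaluating these Gaussian moments in terms of $U,\nu,\theta,\sigma$ yields the six explicit constants in \eqref{expvalbound}. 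For the second-moment bound \eqref{variancebound}, I would square the time-integrated inequality and apply Itô's isometry to $(\int_0^T\mathcal{M}_s\,dW_s)^2$, reducing its expectation to $\mathbb{E}[\int_0^T\mathcal{M}_s^2\,ds]$; cross products of drift integrals are handled by Cauchy--Schwarz against the first-moment bound already in hand, while higher Gaussian moments of $\mathbb{X}_t$ assemble into the explicit polynomial $P(\sigma)$.

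The principal obstacle, flagged in the introduction as \emph{the key to the analysis}, is the rigorous treatment of the stochastic integral as a local martingale: to justify exchanging expectation and limit, I expect to need uniform-in-$n$ moment control on $\sup_{t\le T\wedge\tau_n}\|u(t)\|^2$ strictly stronger than what \eqref{Condition1u} provides, which should in turn be obtained by a Grönwall-type bootstrap against the very energy inequality being derived. A secondary technical point is that $\phi(a,z)$ in \eqref{Background} is not smooth in $z$ because of the cut-off at $a=\delta(z)$, so a rigorous application of Itô's formula will likely require mollifying $\phi$ in $z$ and then passing to the limit, with the final bound independent of the mollifier.
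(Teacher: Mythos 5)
Your proposal follows essentially the same route as the paper: the Hopf/Doering--Constantin decomposition $u=v+\Phi$ with the stochastic boundary-layer thickness \eqref{delta}, the It\^o correction coming from $[\XX]_t=\sigma^2 t$, absorption of the trilinear term into $\nu\|\nabla v\|^2$ via the choice $A=\nu U$, $B=U^2$ (Lemma \ref{L:IneqForDelta}), vanishing of the stochastic integral in expectation, stationary Gaussian moments of $\X$ for the surviving drift terms, and the It\^o isometry plus higher Gaussian moments for the second-moment bound \eqref{variancebound}. The one point worth noting is that your flagged ``principal obstacle'' is not actually an obstacle: the quadratic-variation estimate \eqref{QuadM_t} combined with the moment condition \eqref{Condition1u} already built into the definition of a martingale solution makes $M_T$ a genuine $L^2$ martingale (Lemma \ref{L:mtg}), so no stopping-time localisation or Gr\"onwall bootstrap is required.
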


\medskip

In the above estimate on the mean of the dissipation rate (\ref{expvalbound}), as the variance $\sigma$ of the disturbance from $U$ tends to $0$, we recover the upper bound in Kolmogorov's dissipation law, 
 $$ \lim_{\sigma \rightarrow 0}\,  \varepsilon  \lesssim  \frac{U^3}{h},$$
which is also consistent with the rate proven for the Navier-Stokes equations in \cite{DC92}.  The constants suppressed by the use of $\lesssim$ in \eqref{variancebound}
is explicitly given in  \eqref{varianceboundexplicit} for the second moment.
\begin{re}
Since  $U$ is the mean velocity of the bottom wall, $\X$ has the dimension of velocity.  Therefore, $\theta$ scales as  $\frac{1}{\text{time}}$, and  $\sigma$ has dimension $\frac{\text{velocity}}{\sqrt{\text{time}}}$.  Therefore, one can check that the results in Theorem \ref{T:main} are also dimensionally consistent. 
\end{re}

\section{An almost sure bound on the energy dissipation}

In this section, we prove an almost sure upper bound for the energy dissipation.
We will see that $\delta_t$ in (\ref{delta}) is determined so as to absorb a term involving $\|\nabla v\|$ in \eqref{Eq2_v2}.

With $\delta(z)= \frac{A}{z^2 + B}$, we
let  $\phi(x_3,\X(\omega))=f(\X(\omega))$ where $f:\mathbb{R}\to \mathbb{R}$ is the smooth function,  
\begin{equation}\label{Def:f}
    f(z)=f_{x_3}(z)=\left(1- \dfrac{x_3}{\delta(z)}\right)z\;,
\quad\text{for}\quad x_3\in (0,\,\delta_t)\;.\end{equation} 
It\^o's rule asserts that $\mathbb{P}$-a.s. we have 
\begin{equation}\label{Ito1}
\begin{split}
df(\X) =& f'(\X)\,d\X \,+\,\frac{\sigma^2}{2}f''(\X)\,dt \\
=& f'(\X)\,[\theta (U - \X) dt + \sigma d W_t] \,+\,\frac{\sigma^2}{2}f''(\X)\,dt \\
=& \mathcal{L}f(\X)\,dt \,+\,
 \sigma f'(\X) d W_t 
\end{split}
\end{equation}
for $t\geq 0$, where we used the equation \eqref{OU} of the OU process in the second equality, and 
\begin{equation}\label{Def:Generator}
\mathcal{L}f(z)=f'(z)\,\theta (U - z) + \frac{\sigma^2}{2}f''(z).
\end{equation}
We can extend $\mathcal{L}$ to a differential operator
which is the infinitesimal generator of the OU process.

Before proceedung to the main analysis, we gather some basic calculations  in Lemma \ref{L:Derivatives} below. First note that \eqref{Def:f} implies that
\begin{align}
f'(z)=&\, 1-x_3 \frac{\delta -z\delta'}{\delta^2} \notag\\
f''(z)=&\,
x_3 \frac{z\delta^2\delta'' +2\delta^2\delta'  -2z\delta(\delta')^2}{\delta^4}.\label{Df_0}
\end{align}

\begin{lem}\label{L:Derivatives}
Consider $\delta(z)= \frac{A}{z^2 + B}$ and     $f(z)=f_{x_3}(z)=\left(1- \dfrac{x_3}{\delta(z)}\right)z$ as above. Then, 
\begin{equation}\label{Ddelta}
\delta'(z)=\frac{-2Az}{(z^2 + B)^2} \quad \text{and} \quad
\delta''(z)=\frac{2A(3z^2-B)}{(z^2 + B)^3}.
\end{equation}
Hence from \eqref{Df_0} we have
\begin{equation}\label{Df}
f'(z)=1-x_3\frac{3z^2+B}{A} \quad\text{and}\quad f''(z)=
-x_3 \,\frac{6z}{A}.
\end{equation}
\end{lem}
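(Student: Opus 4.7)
The statement is a routine differentiation, so the plan is simply to compute $\delta'$, $\delta''$, $f'$, $f''$ directly and to compare the results with the claimed formulas. My main shortcut is to avoid the general quotient-rule expression for $f''$ recorded in \eqref{Df_0}. Since $\delta(z) = A/(z^2+B)$, the ratio $z/\delta(z)$ simplifies to $z(z^2+B)/A$, and hence
\[
f(z) = z - \frac{x_3}{A}\bigl(z^3 + Bz\bigr).
\]
For fixed $x_3$ this is just a cubic in $z$; differentiating once and twice immediately yields the formulas for $f'(z)$ and $f''(z)$ asserted in \eqref{Df}.

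For $\delta$ itself, I would write $\delta(z) = A(z^2+B)^{-1}$ and apply the chain rule to obtain $\delta'(z) = -2Az/(z^2+B)^2$. Differentiating once more, either by the quotient rule on $\delta'$ or by combining the chain and product rules, and then collecting the two resulting terms over the common denominator $(z^2+B)^3$, produces the expression for $\delta''(z)$ in \eqref{Ddelta}. As a consistency check, I would substitute the four formulas just obtained into the general expression \eqref{Df_0} and verify that the result collapses to $-6x_3 z/A$, confirming that the two routes to $f''$ agree.

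There is no genuine obstacle here; the lemma is purely bookkeeping. Its role is to record the polynomial forms of $f'(z)$ and $f''(z)$ needed downstream to evaluate the generator $\mathcal{L}f$ defined in \eqref{Def:Generator} and the It\^o decomposition \eqref{Ito1}, which will in turn feed into the later estimates on the energy dissipation rate.
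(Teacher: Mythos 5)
Your proposal is correct and amounts to essentially the same elementary computation the paper performs: the paper records the general quotient-rule expressions \eqref{Df_0} and substitutes the formulas \eqref{Ddelta} for $\delta'$ and $\delta''$ into them, whereas you first simplify $f(z)=z-\frac{x_3}{A}\left(z^3+Bz\right)$ and differentiate the cubic directly, which is a slightly cleaner path to \eqref{Df}. Both routes agree (as your consistency check against \eqref{Df_0} confirms), and there is no gap.
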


A basic tool in  the mathematical understanding of the dissipation rate is the energy inequality, which is obtained \textit{formally} by taking the scalar product of the equations by a solution.  However  in the case of shear flow here, the viscosity term cannot be handled by integration by parts  due to the effect of the inhomogeneous
boundary condition.   The key idea  is to consider $u - \Phi$ which satisfies homogeneous boundary conditions,  where $\Phi$ is the stochastic, incompressible background field (\ref{Background-vec}), carrying the inhomogeneities of the problem. One can then proceed \textit{formally} by taking the scalar product of the equation \eqref{SNSE} by $u - \Phi$ to obtain the following $\mathbb{P}$-a.s. energy inequality, 
\begin{equation}\label{E.I.1}
\int_0^T (d u, u) + \nu\, \int_0^T \|\nabla u\|^2 \, dt \leq \int_0^T (du , \Phi) + \int_0^T (u \cdot \nabla u , \Phi) \,dt + \nu \int_0^T (\nabla u , \nabla \Phi)\, dt. 
\end{equation}

 We present  the rest of the analysis  based on $ v= u - \Phi$ where $v$  is a fluctuating incompressible field which is unforced and hence of arbitrary amplitude. Making the substitution $u = v+  \Phi$ in  (\ref{SNSE}), we find  the stochastic process  $v $ satisfies, 
\begin{equation}\label{Eqn_v}
\begin{split}
d v + d \Phi = - (v \cdot \nabla v +  v \cdot \nabla \Phi + &\Phi \cdot\nabla v + \Phi \cdot\nabla \Phi - \nu \Delta v - \nu \Delta \Phi + \nabla p) \, dt,\\
& \grad \cdot v  =0,
\end{split}
\end{equation}
in the weak sense. The boundary conditions for $v$ are periodic in the $x_1$ and $x_2$ directions while in the $x_3$ direction, $$v(x_1, x_2, 0, t) =v(x_1,x_2, h, t)= 0\;.$$ 

From (\ref{E.I.1}), the energy-type inequality for $v$ is obtained as, 
\begin{equation}\label{Eq1}
\begin{split}
 \int_0^T  \underbrace{( v , dv)}_{\RN{1}} + \underbrace{(v , d\Phi )}_{\RN{2}}  \,  +  \, \nu \|\nabla v \|^2 \, dt \leq  \int_0^T  \big(&  \underbrace{( v \cdot \nabla v, v)}_{\RN{3}} +  \underbrace{( v \cdot \nabla \Phi, v)}_{\RN{4}} + \underbrace{(\Phi \cdot\nabla v, v)}_{\RN{5}} \\
& +  \underbrace{(\Phi \cdot\nabla \Phi, v)}_{\RN{6}}  + \underbrace{\nu (\nabla v, \nabla \Phi)}_{\RN{7}} \big)  \, dt.
\end{split}
\end{equation}
We shall estimate each numbered term in \eqref{Eq1}.

\subsection*{Term \RN{1}} \label{Term 1} 

Using (\ref{SNSE}) and (\ref{Background}) 
$$d v = d u - d \Phi = -  ( u \cdot \nabla u -  \nu \Delta u + \nabla p  ) \, dt - \left\{
\begin{array}{ll}
		\big(df(\X),\, 0,\, 0 \big)\tran  & \mbox{if } 0\leq x_3\leq \delta_t  \\
		0 & \mbox{otherwise}  
	\end{array}.
\right. $$

By  \eqref{Ito1}, the quadratic variation of  $f(\X)$ is $\int_0^t\sigma^2 (f'(\XX_s))^2 ds$.
Hence by It\^{o}'s product rule,
 \begin{equation} \label{vdv}
 v \cdot dv = \frac{1}{2} d (v\cdot v) -  \, \frac{\sigma^2}{2}  (f'(\X))^2 \,dt\;,   \quad \text{for} \quad  0\leq x_3\leq \delta_t.
 \end{equation}

Recall that the boundary layer $D_{\delta}= (0, L)^2 \times (0, \delta_t)$.
Using Proposition \ref{OUProperties} (ii) and \eqref{vdv} together with a direct calculation, we have
\begin{equation}
\begin{split}
 \int_{D} v \cdot dv \, dx  &= \frac{1}{2} d \|v\|^2 - \frac{\sigma^2}{2} \int_{D_{\delta}} (f'(\X))^2 \,dx\,  dt.
 \end{split}
\end{equation}

\subsection*{Term \RN{2}} \label{Term 2}
From \eqref{Ito1} it follows that 
\begin{equation}\label{Term2ab}
\begin{split}
\int_{D} v \, d\Phi \, dx  &= \int_{D_{\delta}} v_1\, d f(\X)  \, dx\\
&=\int_{D_{\delta}} v_1\, \mathcal{L}f(\X)\,dx   \, dt\,+\,
\sigma\int_{D_{\delta}} v_1 f'(\X) dx\,d W_t.
\end{split}
\end{equation}

\subsection*{Term \RN{3}} \label{Term 3} Using the incompressibility of $v$, along with integration by parts, we get 
$$(v \cdot \nabla v, \,v) =0.$$
\subsection*{Term \RN{4}} \label{Term 4}
Since $v_1$ vanishes on the bottom wall,  we can
write $v_1(x_1, x_2, x_3)$ as $\int_0^{x_3} \frac{\partial v_1}{\partial \zeta} (x_1, x_2, \zeta) \, d\zeta$.
Applying the Cauchy-Schwarz inequality (twice), we first estimate as
\begin{align*}
\left|\int_{0}^{L}\, \int_{0}^{L} v_1\, v_3 dx_1 dx_2 \right| & =
\left|\int_{0}^{L}\, \int_{0}^{L} \int_0^{x_3} \frac{\partial v_1}{\partial \xi}(x_1,x_2,\xi) \ d\xi \, 
 \int_0^{x_3} \frac{\partial v_3}{\partial \eta}(x_1,x_2,\eta)\ d \eta\, dx_1 dx_2\right| \\
&\leq x_3 \|\frac{\partial  v_1}{\partial  x_3}\|\,  \|\frac{\partial  v_3}{\partial  x_3}\|\;.
\end{align*}
Using this together with Young's inequality, we have
\begin{equation}
\begin{split}
|( v \cdot \nabla \Phi, v)| = \left|\int_{D_{\delta}} v_1 v_3 \frac{\partial \phi}{\partial x_3} dx\right| & \leq \left|\frac{\X}{\delta_t}\right|\, \left| \int_0^L \, \int_0^L\, \int_0^{\delta_t} v_1 v_3 \, dx_1 dx_2 dx_3\right|\\
& = \left|\frac{\X}{\delta_t}\right|\, \left| \int_0^{\delta_t} \, \left[\int_0^L\, \int_0^L v_1 v_3 \, dx_1 dx_2\right] dx_3\right|\\
& \leq \left|\frac{\X}{\delta_t}\right|\, \left| \int_0^{\delta_t} \, \left[ x_3 \|\frac{\partial  v_1}{\partial  x_3}\|\,  \|\frac{\partial  v_3}{\partial  x_3}\| \right] dx_3\right|\\
& =  \left|\frac{\X}{\delta_t}\right|\,   \frac{\delta_t ^2}{2} \, \|\frac{\partial  v_1}{\partial  x_3}\|\,  \|\frac{\partial  v_3}{\partial  x_3}\|\\
& \leq \frac{\delta_t}{2} |\X| \left[ \frac{1}{2} \|\frac{\partial  v_1}{\partial  x_3}\|^2 + \frac{1}{2} \|\frac{\partial  v_3}{\partial  x_3}\|^2\right]\\
&\leq  \frac{\delta_t}{2}\,  |\X| \, \|\nabla v\|^2.
\end{split}
\end{equation}

\subsection*{Term \RN{5}} \label{Term 5}  Using a  pointwise calculation we have 
$$\Phi \cdot \nabla v = \phi (x_3,\X) \frac{\partial v}{ \partial x_1}.$$ 
Therefore,  using  integration by parts and then the periodicity of $v$,  one can show that, 
\begin{equation}
\begin{split}
(\Phi \cdot \nabla v, v ) &= \frac{1}{2} \int_{D_{\delta}} \phi (x_3,\X) \frac{\partial}{\partial x_1} |v|^2 \, d x\\
& = \frac{1}{2} \int_0^{\delta_t} \phi(x_3,\X) \, \int_0^L  \left(\int_0^L \frac{\partial}{\partial x_1}   |v|^2  \, dx_1 \right)\,dx_2\,dx_3\\
& = 0.
\end{split}
\end{equation}

\subsection*{Term \RN{6}} \label{Term 6} A pointwise calculation leads to $\Phi \cdot \nabla  \Phi = 0$, hence,  
$$(\Phi \cdot \nabla \Phi, v ) =0.$$
\subsection*{Term \RN{7}} \label{Term 7} Direct calculation shows that  $\frac{\partial \phi(x_3,z)}{\partial x_3}=\frac{-z}{\delta(z)}$ for $0<x_3<\delta(z)$. Hence 
\begin{equation}\label{Gradphi}
    \|\frac{\partial \phi}{\partial x_3} \| = \frac{L}{\delta_t^{1/2}}\, |\X|.
\end{equation}

Therefore using the Cauchy-Schwarz inequality and Young's inequality, we find 
\begin{equation}\label{ineq_TermVII}
\begin{split}
&|\nu (\nabla v, \nabla \Phi)| \leq \nu \int_{D}  \left| \frac{\partial \phi}{\partial x_3}\right| \, 
\left| \frac{\partial v_1}{\partial x_3}\right| dx\\
& \leq \nu \| \frac{\partial \phi}{\partial x_3}\| \, \| \frac{\partial v_1}{\partial x_3}\| \\
& \leq \nu  \, \frac{L}{\delta_t^{1/2}}\, |\X|\, \|\nabla v\|\\
& \leq  \frac{\nu}{\delta_t} L^2 |\X|^2 + \frac{\nu}{4} \|\nabla v\|^2.
\end{split}
\end{equation}

Using  the estimates  for all the seven terms above in (\ref{Eq1}) yields,
\begin{equation}\label{Eq2}
\begin{split}
& \frac{1}{2} d \|v \|^2 + \frac{3\nu}{4} \|\nabla v \|^2 \, dt \,+\,\sigma\int_{D_{\delta_t}} v_1 f'(\X) dx\,d W_t\\
& \leq \, 
\frac{\sigma^2}{2} \int_{D_{\delta}} (f'(\X))^2 \,dx\,  dt + 
\left| 
\int_{D_{\delta}} v_1\, \mathcal{L}f(\X)\,dx   
\right| \,dt
+  \left[\frac{\delta_t}{2} |\X| \, \|\nabla v\|^2+ \nu L^2 \frac{|\X|^2}{\delta_t}   \right]\, dt,
\end{split}
\end{equation}
where we recall that $\delta_t=\delta(\X)$, the function $  f(z)=f_{x_3}(z)=\left(1- \dfrac{x_3}{\delta(z)}\right)z$ is defined in \eqref{Def:f} and therefore has derivatives given by \eqref{Df_0}. 

The second term on the right hand side of  (\ref{Eq2}) can be bounded from above by using the next lemma, which is proved in the Appendix. 
\begin{lemma}\label{L:BoundvG}
Let  $G=(G_t)_{t\in \mathbb{R}_+}$ be a stochastic process defined on the probability space in the martingale solution to \eqref{SNSE}-\eqref{BC}. Then $\mathbb{P}$-a.s., we have for all $t\in \mathbb{R}_+$,
\begin{equation*}
\left|\int_{D_\delta} v_1 \,G_t\, dx\right| 
  \leq  \,  \|\nabla v(t) \| \, \delta_t \, L\, \left(    \int_0^{\delta_t}  \left|G_t\right|^2   dx_3  \right)^{\frac{1}{2}}.
\end{equation*}
\end{lemma}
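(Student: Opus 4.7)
The plan is to exploit the vanishing boundary condition $v_1(x_1,x_2,0)=0$ via a one-dimensional fundamental theorem of calculus argument in the $x_3$ direction, mirroring the chain of Cauchy--Schwarz estimates already used to bound Term IV above, and then to apply Cauchy--Schwarz once more in $x_3$ to separate $G_t$ from $v$.

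First I would rewrite
\[
v_1(x_1,x_2,x_3) \;=\; \int_{0}^{x_3}\frac{\partial v_1}{\partial\xi}(x_1,x_2,\xi)\,d\xi,
\]
which is valid almost everywhere because $v_1$ vanishes on the bottom wall. Squaring and applying the Cauchy--Schwarz inequality in $\xi$ yields the pointwise bound $|v_1(x_1,x_2,x_3)|^2 \leq x_3 \int_0^{x_3}|\partial_\xi v_1|^2\,d\xi$. Integrating this over $(x_1,x_2)\in(0,L)^2$ and extending the $\xi$-integration to all of $(0,h)$ gives
\[
\int_0^L\!\!\int_0^L |v_1(x_1,x_2,x_3)|^2\,dx_1dx_2 \;\leq\; x_3\,\|\partial_{x_3}v_1\|^2 \;\leq\; x_3\,\|\nabla v(t)\|^2.
\]

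Next, since $G_t$ depends only on $x_3$ (as is the case in the applications of the lemma, where $G_t$ takes the form $f'(\X)$, $f''(\X)$, or $\mathcal{L}f(\X)$ evaluated at $x_3$), I would write
\[
\left|\int_{D_\delta} v_1\,G_t\,dx\right| \;=\; \left|\int_0^{\delta_t} G_t(x_3)\int_0^L\!\!\int_0^L v_1\,dx_1dx_2\,dx_3\right|,
\]
apply Cauchy--Schwarz in $(x_1,x_2)$ to obtain $\left|\int_0^L\int_0^L v_1\,dx_1dx_2\right| \leq L\,\sqrt{x_3}\,\|\nabla v(t)\|$, and substitute to deduce
\[
\left|\int_{D_\delta} v_1\,G_t\,dx\right| \;\leq\; L\,\|\nabla v(t)\|\,\int_0^{\delta_t} \sqrt{x_3}\,|G_t(x_3)|\,dx_3.
\]

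Finally, a single application of Cauchy--Schwarz in the remaining $x_3$ integral, using $\int_0^{\delta_t} x_3\,dx_3 = \delta_t^2/2 \leq \delta_t^2$, finishes the proof:
\[
\int_0^{\delta_t}\sqrt{x_3}\,|G_t|\,dx_3 \;\leq\; \Bigl(\int_0^{\delta_t} x_3\,dx_3\Bigr)^{1/2}\Bigl(\int_0^{\delta_t}|G_t|^2\,dx_3\Bigr)^{1/2} \leq\; \delta_t\,\Bigl(\int_0^{\delta_t}|G_t|^2\,dx_3\Bigr)^{1/2}.
\]
There is no genuine obstacle here; the only delicate point is keeping track of the measurability in $\omega$ so that the resulting inequality holds $\mathbb{P}$-almost surely for every $t$ and for the random upper limit $\delta_t=\delta(\X)$, which is handled by noting that all the estimates above are pathwise and that $\delta_t$ is adapted.
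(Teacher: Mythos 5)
Your proof is correct and follows essentially the same route as the paper's: both hinge on writing $v_1(x_1,x_2,x_3)=\int_0^{x_3}\partial_\xi v_1\,d\xi$ using the vanishing boundary condition and then applying Cauchy--Schwarz together with the bounds $\int_0^{\delta_t}x_3\,dx_3\le\delta_t^2$ and the extension of the $\xi$-integral to recover $\|\nabla v\|$. The only cosmetic difference is that you apply Cauchy--Schwarz in stages (in $\xi$, then $(x_1,x_2)$, then $x_3$) whereas the paper applies it once to the full four-fold integral; the result and the constant are the same.
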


Applying Lemma \ref{L:BoundvG} with $G_t=\mathcal{L}f(\X)$ and then
 using Young's inequality, we have
\begin{equation}\label{Eq3}
\begin{aligned}
\left|\int_{D_\delta} v_1 \,\mathcal{L}f(\X)\, dx\right| & \leq    \|\nabla v \| \, \delta_t \, L\, \left(    \int_0^{\delta_t}  \left|\mathcal{L}f(\X)\right|^2   dx_3  \right)^{\frac{1}{2}}\\
& \leq \frac{\nu}{4} \, \|\nabla v \|^2 + \frac{1}{\nu} \delta_t^2\, L^2 \, \left(    \int_0^{\delta_t}  \left|\mathcal{L}f(\X)\right|^2   dx_3  \right)
\end{aligned}
\end{equation}

Hence  inserting estimate   (\ref{Eq3}) in (\ref{Eq2}), and  collecting terms that involve $\|\nabla v\|$,  we have the following stochastic equation. 
\begin{equation}\label{Eq2_v2}
\begin{split}
& \frac{1}{2} d \| v \|^2 + \left(\frac{1}{2} - \frac{\delta_t \, |\X|}{2 \nu}\right) \nu \| \nabla v \|^2 dt + \,\sigma\int_{D_{\delta_t}} v_1 f'(\X) dx\,d W_t  \\ 
\leq & 
 \left[\frac{\sigma^2}{2} \int_{D_{\delta}} (f'(\X))^2 \,dx + {\nu L^2 \frac{|\X|^2}{\delta_t} } +   \,\frac{1}{\nu} \delta_t^2\, L^2 \,     \int_0^{\delta_t}  \left|\mathcal{L}f(\X)\right|^2   dx_3  \right]\, dt.
\end{split}
\end{equation}
All stochastic differential inequalities  appearing in this paper should be interpreted in their corresponding integral forms. 

We note that the calculations up to and including \eqref{Eq2_v2} work for a general $C^2$ function $\delta=\delta(z)$.
For $\delta$ as in \eqref{delta} it is crucial to choose $A$ and $B$ such that $\left(\frac{1}{2} - \frac{\delta_t \, |\X|}{2 \nu}\right)$ in the second term of \eqref{Eq2_v2} to be strictly positive. Such conditions are summarized in the following lemma.
\begin{lemma}\label{L:IneqForDelta}
Let $\delta_t=\delta(\X)$, where $\X$ is a stochastic process in $\mathbb{R}$ and $\delta(z)= \frac{A}{z^2 + B}$.
Suppose $A$ and $B$ are positive numbers such that  $\frac{A}{B}<h$ and $A\leq \nu\sqrt{B}$. Then with probability one, for all $t\geq 0$ we have $\delta_t<h$ and
\begin{equation}\label{IneqForDelta}
\frac{1}{4}  \leq  \frac{1}{2} - \frac{\delta_t \, |\X|}{2 \nu} \leq \frac{1}{2}.  
\end{equation}
These hold if, for instance, $A=\nu U$ and $B=U^2$ and $\frac{U\, h}{\nu} >1$. 
\end{lemma}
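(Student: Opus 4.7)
The lemma is a statement about the real function $\delta(z)=A/(z^2+B)$ evaluated along the OU process. Because the OU process is $\mathbb{R}$-valued and the bounds I need to prove will hold for \emph{every} $z\in\mathbb{R}$ (not just in expectation), the almost-sure claim reduces to two pointwise inequalities. The plan is therefore to strip away the stochastic dressing and check two elementary estimates on $\delta$, and then verify that the two hypotheses are met by the proposed choice $A=\nu U$, $B=U^2$.

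For the first claim $\delta_t<h$, I would simply note that $z^2+B\ge B$, hence $\delta(z)=\frac{A}{z^2+B}\le \frac{A}{B}<h$ by hypothesis, for every realization of $\X$. For the second, the upper bound $\tfrac{1}{2}-\tfrac{\delta_t|\X|}{2\nu}\le\tfrac{1}{2}$ is immediate since $\delta_t,|\X|\ge 0$. For the matching lower bound, the key step is to prove
\[
\delta(z)\,|z| \;=\; \frac{A\,|z|}{z^2+B} \;\le\; \frac{\nu}{2} \qquad \text{for all } z\in\mathbb{R}.
\]
The natural approach is AM--GM: $z^2+B\ge 2|z|\sqrt{B}$, which gives $\delta(z)|z|\le \frac{A|z|}{2|z|\sqrt{B}}=\frac{A}{2\sqrt{B}}$ when $z\ne 0$ (the case $z=0$ being trivial). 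The assumption $A\le \nu\sqrt{B}$ then yields $\delta(z)|z|\le \tfrac{\nu}{2}$, so $\tfrac{\delta_t|\X|}{2\nu}\le \tfrac14$ and \eqref{IneqForDelta} follows.

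For the concrete choice $A=\nu U$, $B=U^2$, one checks that $A/B=\nu/U$, so the condition $A/B<h$ is equivalent to $\Rey=Uh/\nu>1$; and $\nu\sqrt{B}=\nu U=A$, so $A\le \nu\sqrt{B}$ holds (with equality), making the AM--GM bound tight. Both hypotheses are satisfied, and the specialization of the lemma is verified.

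The only ``obstacle'' is recognizing that a single AM--GM step delivers the sharp constant $\nu/2$ that is needed to push $\tfrac{\delta_t|\X|}{2\nu}$ below $\tfrac14$; everything else is immediate from nonnegativity and the explicit form of $\delta$. No stochastic analysis is required beyond the pathwise observation that $\X(\omega)\in\mathbb{R}$.
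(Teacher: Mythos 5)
Your proof is correct and follows essentially the same route as the paper: the bound $\delta_t\le A/B<h$, the AM--GM inequality $\frac{|z|}{z^2+B}\le\frac{1}{2\sqrt{B}}$ combined with $A\le\nu\sqrt{B}$ to get the lower bound $1/4$, and the verification that $A=\nu U$, $B=U^2$ satisfy both hypotheses when $\Rey>1$. No issues.
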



\begin{proof}
Note that  $\delta_t \in (0,h)$ if $\frac{A}{B}<h$. Next, by the inequality $\frac{z}{z^2+B}\leq \frac{1}{2\sqrt{B}}$ for all $z\in\mathbb{R}$, we have
\begin{equation*}
\frac{1}{2}-\frac{A}{4\nu\,\sqrt{B}}  \leq  \frac{1}{2} - \frac{\delta_t \, |\X|}{2 \nu}.  
\end{equation*}
The term on the left is at least $1/4$ if  $A\leq \nu\sqrt{B}$.
\end{proof}


We summarize the above derivations in
 the following almost sure upper bound for the energy dissipation, which is the main result of this section.

\begin{lemma}\label{L:Eq6b}
Suppose  $A$ and $B$ are positive constants such that  $\frac{A}{B}<h$ and  $A\leq \nu\sqrt{B}$. 
Then with probability one, the following inequality holds for all $T>0$.
\begin{equation}\label{Eq6b}
 \int_0^T\,  \nu \| \nabla v \|^2 dt   + 4M_T \leq  2  \| v(0) \|^2 -2 \| v(T) \|^2+ Y_T,
\end{equation}
where 
\begin{equation}\label{M_t}
M_T:=  \sigma \int_0^T\int_{D_{\delta}} v_1\,\left(1-x_3\frac{3\X^2+B}{A}\right)\, dx\,d W_t.
\end{equation}
and 
\begin{equation}\label{Y_t}
Y_T:=  4\, L^2 \,T\, \left[  \frac{3}{2}\, \frac{A}{B} + \frac{6}{\nu}\, \left(\frac{A}{B}\right)^3 \, \frac{\sigma^2}{B}\right] \, \sigma^2 \,+\,
 4\, L^2 \int_0^T \left( \nu  \frac{|\X|^2}{\delta_t}  +   \,\frac{6}{\nu}\left(\frac{A}{B}\right)^{3} \,\theta^2\, |U - \X|^2 \, \right) dt.
\end{equation}
\end{lemma}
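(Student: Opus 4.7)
The plan is to view this as a bookkeeping consequence of the differential inequality \eqref{Eq2_v2}. First, invoke Lemma \ref{L:IneqForDelta}: the hypotheses $A/B<h$ and $A\leq \nu\sqrt{B}$ guarantee $\delta_t<h$ almost surely for all $t$, and allow me to replace the coefficient $\tfrac{1}{2} - \tfrac{\delta_t|\X|}{2\nu}$ multiplying $\nu\|\nabla v\|^2\,dt$ on the left of \eqref{Eq2_v2} by its lower bound $1/4$. Multiplying the resulting inequality by $4$ and integrating from $0$ to $T$, I would use $\int_0^T d\|v\|^2 = \|v(T)\|^2 - \|v(0)\|^2$ and move those boundary terms to the right; the stochastic integral on the left contributes $4M_T$ after noting, via Lemma \ref{L:Derivatives}, that $f'(\X)=1-x_3(3\X^2+B)/A$, which matches \eqref{M_t}. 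The task then reduces to showing that $4$ times the time-integral of the right-hand side of \eqref{Eq2_v2} is at most $Y_T$.

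The right-hand side has three integrands. For the first, $\tfrac{\sigma^2}{2}\int_{D_\delta}(f'(\X))^2 dx = \tfrac{\sigma^2 L^2}{2}\int_0^{\delta_t}(f')^2 dx_3$, I would perform the substitution $u=1-x_3(3\X^2+B)/A$ to obtain the closed form $\int_0^{\delta_t}(f')^2 dx_3 = \tfrac{A}{3(3\X^2+B)}\bigl[1 + \tfrac{8\X^6}{(\X^2+B)^3}\bigr]$, which is at most $3A/B$ by the crude bounds $\X^6 \leq (\X^2+B)^3$ and $3\X^2+B\geq B$. Multiplying by $4\cdot L^2\sigma^2/2$ and integrating in time reproduces the $4L^2 T\cdot\tfrac{3}{2}\tfrac{A}{B}\sigma^2$ piece of $Y_T$. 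The second integrand $\nu L^2|\X|^2/\delta_t$ is already in the required form and contributes $4L^2\int_0^T\nu|\X|^2/\delta_t\,dt$ unchanged.

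For the third integrand $\tfrac{L^2\delta_t^2}{\nu}\int_0^{\delta_t}|\mathcal L f(\X)|^2 dx_3$, I would decompose $\mathcal L f = \theta(U-\X)f'(\X) + \tfrac{\sigma^2}{2}f''(\X)$ and apply $(a+b)^2 \leq 2a^2+2b^2$. The drift piece reuses the bound $\int_0^{\delta_t}(f')^2 \leq 3A/B$ from the previous paragraph; the diffusion piece uses Lemma \ref{L:Derivatives}'s identity $f''=-6x_3\X/A$, which gives $\int_0^{\delta_t}(f'')^2 dx_3 = 12\X^2\delta_t^3/A^2$. Together with $\delta_t \leq A/B$, these produce $\tfrac{24L^2}{\nu}(A/B)^3\theta^2(U-\X)^2$ for the drift piece and $\tfrac{24L^2}{\nu}\cdot\tfrac{\delta_t^5\X^2}{A^2}\sigma^4 \leq \tfrac{24L^2}{\nu}(A/B)^3\tfrac{\sigma^4}{B}$ for the diffusion piece, where the last step uses $\delta_t^5\X^2/A^2 = A^3\X^2/(\X^2+B)^5 \leq A^3/B^4 = (A/B)^3/B$. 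Collecting all contributions reproduces $Y_T$ exactly.

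I expect the only subtle step to be this last tracking of powers: one must see that the dimensional combination $\delta_t^5\X^2/A^2$ collapses to $(A/B)^3/B$, which is what produces the $\sigma^2/B$ factor inside the bracket defining $Y_T$. Everything else is routine substitution once Lemma \ref{L:IneqForDelta} has been invoked.
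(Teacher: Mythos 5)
Your proposal is correct and follows essentially the same route as the paper: start from \eqref{Eq2_v2}, invoke Lemma \ref{L:IneqForDelta} to replace the coefficient of $\nu\|\nabla v\|^2$ by $1/4$, integrate and multiply by $4$, and bound the three right-hand-side terms using $\int_0^{\delta_t}(f')^2\,dx_3\leq 3A/B$, $\int_0^{\delta_t}(f'')^2\,dx_3=12\X^2\delta_t^3/A^2$, and $\delta_t\leq A/B$ to arrive at $Y_T$. The only differences are cosmetic (you evaluate $\int_0^{\delta_t}(f')^2\,dx_3$ in closed form before bounding, and you bound $\delta_t^5\X^2/A^2$ in one step where the paper first uses $\X^2\leq A/\delta_t$), and both yield the same constants.
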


\begin{proof}
The stochastic integral term in \eqref{Eq2_v2} is
\[
\sigma\int_{D_{\delta}} v_1 f'(\X) dx\,d W_t  =
\sigma\int_{D_{\delta}} v_1\,\left(1-x_3\frac{3\X^2+B}{A}\right)\, dx\,d W_t.
\]

We now estimate terms on the right hand side of \eqref{Eq2_v2}.
For the first term, $\int_{D_{\delta}} (f'(\X))^2 \,dx = L^2\int_0^{\delta_t}(f'(\X))^2 \,dx_3$ and
\begin{align}
\int_{0}^{\delta_t} (f'(\X))^2 \,dx_3 = &\,
\int_{0}^{\delta_t} \left(1-x_3\frac{3\X^2+B}{A}\right)^2 \,dx_3 \notag\\
=&\, \delta_t- \delta_t^2\frac{3\X^2+B}{A} +   \frac{\delta_t^{3}}{3} \left(\frac{3\X^2+B}{A}\right)^2  \notag\\
\leq &\, \delta_t- \delta_t^2\frac{\X^2+B}{A} +   \frac{\delta_t^{3}}{3}\,\frac{9}{\delta_t^2}  \notag\\
\leq & \,3\,\frac{A}{B}, \label{int_f'}
\end{align}
where we used the fact that $\frac{3\X^2+B}{A}\leq \frac{3}{\delta_t}$ and $\delta(z)\leq \frac{A}{B}$ for all $z\in\mathbb{R}$.

Now we consider the term involving $\mathcal{L}f(\X)$. By the definition \eqref{Def:Generator} of $\mathcal{L}$ and the elementary inequality $(a+b)^2\leq 2(a^2+b^2)$,
\begin{align*}
\mathcal{L}f(\X)=&\, f'(\X)\theta (U - \X) + \frac{\sigma^2}{2}f''(\X) \\
\left|\mathcal{L}f(\X)\right|^2\leq&\,
2\,|f'(\X)|^2\,\theta^2\,(U-\X)^2\,+\,\frac{\sigma^4}{2}\left(f''(\X)\right)^2.
\end{align*}
So using  \eqref{int_f'} and the expression  $f''(\X)=-6x_3\frac{\X}{A}$,  we see that  in the last term on the right of \eqref{Eq2_v2},
\begin{align}
\int_0^{\delta_t} |\mathcal{L}f(\X)|^2\,dx_3  & \leq 2\,\theta^2\,(U-\X)^2\,\int_0^{\delta_t} (f'(\X))^2 \,dx_3 \,+\,\frac{\sigma^4}{2}\int_0^{\delta_t} \left(f''(\X)\right)^2\,dx_3 \notag\\
& \leq 6 \, \frac{A}{B} \,\theta^2\,(U-\X)^2 \,+\,6\sigma^4\,\frac{\X^2}{A^2}\,\delta_t^3 \notag\\
& \leq 6 \, \frac{A}{B} \,\theta^2\,(U-\X)^2 \,+\,6\sigma^4\,\frac{\delta_t^2}{A} \notag\\
& \leq 6 \, \frac{A}{B} \,\theta^2\,(U-\X)^2 \,+\,6\sigma^4\,\frac{A}{B^2}.
\end{align}
In the above, we used the fact that $|\X|^2 \leq \frac{A}{\delta_t}$ and $\delta_t \leq \frac{A}{B}$.

Hence after using $\delta_t \leq \frac{A}{B}$, the right hand side of \eqref{Eq2_v2} (ignoring $dt$) is bounded above by
\begin{equation}\label{LastTerm}
\frac{3\, \sigma^2}{2}\, L^2\,\frac{A}{B}+ \nu L^2 \frac{|\X|^2}{\delta_t}  +  \,  L^2 \, \left(  \frac{6}{\nu}\, \, \left(\frac{A}{B}\right)^3 \,\theta^2\,(U-\X)^2 \,+\, \frac{6}{\nu} \, \,\left(\frac{A}{B}\right)^3 \frac{\sigma^4}{B} \, \right)\;.
\end{equation}

Applying \eqref{IneqForDelta}  to the second term on the left of \eqref{Eq2_v2}, and \eqref{LastTerm} to the right of \eqref{Eq2_v2}, we obtain
\begin{equation}\label{Eq6}
\begin{split}
&\frac{1}{2}  \| v(T) \|^2  - \frac{1}{2}  \| v(0) \|^2+ \frac{1}{4}\, \int_0^T\,  \nu \| \nabla v \|^2 dt  
+ \sigma \int_0^T\int_{D_{\delta}} v_1\,\left(1-x_3\frac{3\X^2+B}{A}\right)\, dx\,d W_t \\
\leq &   L^2 \int_0^T \left( \frac{3}{2}  \frac{A}{B} \, \sigma^2+ \nu  \frac{|\X|^2}{\delta_t}  + \frac{6}{\nu}\, \, \left(\frac{A}{B}\right)^3 \,\theta^2\,(U-\X)^2 \,+\, \frac{6}{\nu} \, \,\left(\frac{A}{B}\right)^3 \frac{\sigma^4}{B} \,\right) dt  \\
= &   L^2 \,T\, \left[  \frac{3}{2}\, \frac{A}{B} + \frac{6}{\nu}\, \left(\frac{A}{B}\right)^3 \, \frac{\sigma^2}{B}\right] \, \sigma^2 \,+\,
 L^2 \int_0^T \left( \nu  \frac{|\X|^2}{\delta_t}  +   \,\frac{6}{\nu}\left(\frac{A}{B}\right)^{3} \,\theta^2\, |U - \X|^2 \, \right) dt.
\end{split}
\end{equation}

\end{proof}

Condition \eqref{Condition1u} ensures that the process $M$ defined in \eqref{M_t} is a  martingale.
\begin{lemma}\label{L:mtg}
The process $(M_t)_{t\geq 0}$ defined in \eqref{M_t} is a  martingale whose quadratic variation satisfies
\begin{align}\label{QuadM_t}
[ M ]_T
\leq & \, 3\sigma^2L^2\left(\frac{A}{B}\right)^{3} \int_0^T\,   \|\nabla v \|^2\,    d t\quad\text{ for} \quad T\geq 0.
\end{align}
\end{lemma}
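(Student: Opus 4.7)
I would identify $M_T$ as an It\^o stochastic integral $\int_0^T H(s)\,dW_s$ with integrand
\[
H(t) := \sigma \int_{D_\delta} v_1(t,x)\, f'(\mathbb{X}_t)\,dx, \qquad f'(z) = 1 - x_3\frac{3z^2+B}{A}.
\]
The plan is three steps: (i) verify $H$ is progressively measurable; (ii) derive the pointwise bound $|H(t)|^2 \le 3\sigma^2 L^2 (A/B)^3\,\|\nabla v(t)\|^2$; and (iii) show $\mathbb{E}\int_0^T H(t)^2\,dt < \infty$. Standard It\^o theory then makes $M$ a true $\mathcal{F}_t$-martingale with quadratic variation $[M]_T = \int_0^T H(t)^2\,dt$, and (ii) converts this equality into the claimed inequality \eqref{QuadM_t}.

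Progressive measurability is immediate from the martingale solution framework: $v = u - \Phi$ is progressively measurable because $u$ is, and $f'(\mathbb{X}_t),\,\delta_t$ are continuous functions of the adapted process $\mathbb{X}$; integration in $x$ preserves measurability.

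For the pointwise bound, I would apply Lemma \ref{L:BoundvG} with $G_t = f'(\mathbb{X}_t)$ (this is legitimate since $v_1$ inherits the vanishing trace at $x_3=0$ from $v\in V$, which is the property driving that lemma's proof). This yields
\[
|H(t)| \;\le\; \sigma\,\|\nabla v(t)\|\,\delta_t\,L\left(\int_0^{\delta_t}(f'(\mathbb{X}_t))^2\,dx_3\right)^{1/2}.
\]
The inner integral has already been shown in \eqref{int_f'} to be at most $3A/B$, and $\delta_t = A/(\mathbb{X}_t^2+B) \le A/B$. Squaring gives precisely the pointwise bound; note this reuses an estimate already established in the proof of Lemma \ref{L:Eq6b}, so no new analytic work is required here.

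The step that deserves care is upgrading from local to true martingale. Using the pointwise bound and the triangle inequality $\|\nabla v\|^2 \le 2\|\nabla u\|^2 + 2\|\nabla \Phi\|^2$,
\[
\mathbb{E}\int_0^T H(t)^2\,dt \;\le\; 6\sigma^2 L^2 \Bigl(\tfrac{A}{B}\Bigr)^{\!3}\mathbb{E}\int_0^T \bigl(\|\nabla u(t)\|^2 + \|\nabla \Phi(t)\|^2\bigr)\,dt.
\]
The $u$-term is finite by Condition \eqref{Condition1u}. For the $\Phi$-term, \eqref{Gradphi} gives $\|\nabla \Phi(t)\|^2 = L^2\,\mathbb{X}_t^2/\delta_t = L^2\,\mathbb{X}_t^2(\mathbb{X}_t^2+B)/A$, a polynomial of degree four in $\mathbb{X}_t$; since $\mathbb{X}_t \sim \mathcal{N}(U,\sigma^2/(2\theta))$ is stationary Gaussian by Proposition \ref{OUProperties}(i), all its moments are finite and bounded uniformly in $t$, so the time integral is finite. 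This gives $H \in L^2(\Omega \times [0,T])$ for every $T\geq 0$, which is the standard condition ensuring $M$ is a true martingale and yielding $[M]_T = \int_0^T H(t)^2\,dt$. The main obstacle is exactly this integrability check for $\|\nabla\Phi\|^2$, which crucially uses stationarity and Gaussianity of $\mathbb{X}$; replacing the OU process by a pure Wiener process would fail at this point, consistent with the ``over-dissipation'' warning in the abstract.
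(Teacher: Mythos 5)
Your proof is correct and follows essentially the same route as the paper: the quadratic-variation bound comes from Lemma \ref{L:BoundvG} with $G_t=f'(\X)$ combined with \eqref{int_f'} and $\delta_t\le A/B$, exactly as in the paper's argument, and your explicit $L^2(\Omega\times[0,T])$ integrability check (via \eqref{Condition1u} for the $u$-part and the Gaussian moments of $\X$ for the $\Phi$-part) simply spells out what the paper compresses into the one-line remark that Condition \eqref{Condition1u} ensures $M$ is a martingale. One small caveat on your closing aside: replacing the OU process by a Wiener process would \emph{not} break this integrability check, since for fixed finite $T$ one still has $\int_0^T\mathbb{E}[W_t^4+BW_t^2]\,dt<\infty$ and $M$ remains a true martingale; the failure occurs only in the $T\to\infty$ time average, as in Remark \ref{Remark5.1}, not in this lemma.
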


\begin{proof}

Applying Lemma \ref{L:BoundvG} with $G_t=f'(\X)=1-x_3\frac{3\X^2+B}{A}$, and then  \eqref{int_f'}, we have
\begin{equation}\label{term2p_v2}
\begin{aligned}
\left|\int_{D_\delta} v_1 \,f'(\X)\, dx\right| 
  &\leq  \,  \|\nabla v \| \, \delta_t \, L\, \left(    \int_0^{\delta_t}  \left|f'(\X)\right|^2   dx_3  \right)^{\frac{1}{2}}\\
  &\leq \,  \|\nabla v \| \, \delta_t \, L\, \left(    \frac{3A}{B} \right)^{\frac{1}{2}}\\
  &\leq\, 3^{1/2}\,\|\nabla v \| \, L\, \left(\frac{A}{B}\right)^{3/2}.
\end{aligned}
\end{equation}
In the above, we used the fact that $\delta(z)\leq \frac{A}{B}$ for all $z\in\mathbb{R}$.

Hence the quadratic variation of $M_T$ is
\begin{align*}
[ M ]_T=& \sigma^2\int_0^T\left[\int_{D_{\delta}}  v_1\,\left(1-x_3\frac{3\X^2+B}{A}\right)\, dx \right]^2  dt \\
\leq &  \sigma^2\int_0^T\left[3^{1/2}\,\|\nabla v \|\,L\,\left(\frac{A}{B}\right)^{3/2}\right]^2  dt\\
\leq& 3\sigma^2\left(\frac{A}{B}\right)^{3}L^2 \int_0^T\,   \|\nabla v \|^2\,    d t.
\end{align*}
\end{proof}

\section{Estimation of the Mean Value}

To construct the estimate on  $\mathbb{E} [\langle\epsilon \rangle_T]$, we shall take the  expected value  of \eqref{Eq6b} with respect to $\mathbb{P}$, then  average it over $[0, T]$, and finally take the limit superior as $T \rightarrow \infty$.  Since $u = v + \Phi$, we obtain
\begin{equation}\label{Grad_uvPhi}
\int_0^T\,  \| \nabla u \|^2 dt = \int_0^T\,  \| \nabla v+ \nabla\Phi \|^2 dt 
\leq  2 \int_0^T\,  \| \nabla v \|^2  + \|\nabla \Phi\|^2 dt.
\end{equation}

The second term in the integrand is, from \eqref{Gradphi},
\begin{equation}\label{Gradphi2}
\|\nabla \Phi\|^2   =\|\frac{\partial \phi}{\partial x_3} \|^2 = \frac{L^2}{\delta_t}\, \X^2= L^2\,\frac{\X^4+B\X^2}{A}
\end{equation}
Hence
\begin{align}
\mathbb{E}\left[\int_0^T\,  \| \nabla \Phi \|^2 dt\right]  =\frac{T\,L^2}{A}\,\mathbb{E}\left[ \X^4+B\X^2 \right] 
\end{align}
which can be evaluated explicitly using \eqref{m1} and \eqref{m2} below. From Proposition \ref{OUProperties}, 
\begin{align}
\E\left[ |\X|^2 \right]=& U^2+\frac{\sigma^2}{2\theta},\qquad \qquad \qquad \E\left[ |U-\X|^2 \right]= \frac{\sigma^2}{2\theta}, \label{m1}\\
\E\left[ |\X|^4\right] =&
  U^4+6U^2\Big(\frac{\sigma^2}{2\theta}\Big)+3\Big(\frac{\sigma^2}{2\theta}\Big)^2,\qquad  \E\left[ |U-\X|^4\right] =3\Big(\frac{\sigma^2}{2\theta}\Big)^2.\label{m2}
\end{align}

We now estimate the first term on the right of
\eqref{Grad_uvPhi}.
From  Lemma \ref{L:mtg}, $M_{T}$ is a martingale and hence
\begin{equation}\label{Eq6-6}
\E[M_{T}]  =0 \quad\text{for all }T\in [0,\infty).
\end{equation}
Therefore, taking the expectation $\mathbb{E}$ of both sides of
\eqref{Eq6b} gives
\begin{equation}\label{Eq6b_taun3}
 \mathbb{E}\int_0^{T}\,  \nu \| \nabla v \|^2 dt   \leq  \mathbb{E} \left[ 2  \| v(0) \|^2 + Y_{T} \right].
\end{equation}

We shall estimate the expectation of the integral term in $Y_T$ defined in \eqref{Y_t}. 
To this end we need 
some standard properties for the stationary OU process  and Gaussian random variables as stated in Proposition \ref{OUProperties}.
Recall that,  $\X$ has  normal distribution with mean $U$ and variance $\frac{\sigma^2}{2\theta}$ for all $t\in\mathbb{R}_+$ under $\mathbb{P}$. Hence $U - \X$ is a centered normal variable with variance $\frac{\sigma^2}{2\theta}$.

Hence we
can compute the expectation of the integral of  (\ref{Y_t}) as follows.
\begin{align}
&\E\int_0^T \left(\nu  \frac{|\X|^2}{\delta_t}  +   \,\frac{6}{\nu}\left(\frac{A}{B}\right)^{3} \,\theta^2\, |U - \X|^2 \right) \, dt \notag\\
=&\E\int_0^T  \left( \nu  \frac{\X^4+B\X^2}{A}  +   \,\frac{6}{\nu}\left(\frac{A}{B}\right)^{3} \,\theta^2\, |U - \X|^2 \, dt \right) \notag\\
=& T\,\left\{\frac{\nu}{A}\left(U^4+6U^2\Big(\frac{\sigma^2}{2\theta}\Big)+3\Big(\frac{\sigma^2}{2\theta}\Big)^2\,+BU^2+B\frac{\sigma^2}{2\theta}\right)\,+\,\frac{6}{\nu}\left(\frac{A}{B}\right)^{3} \,\theta^2\,\frac{\sigma^2}{2\theta}\right\}. \label{MeanIntegral}
\end{align}

Now we continue from \eqref{Eq6b_taun3}.
Divide both sides by $T$ and $|D| = L^2 h$, and use \eqref{MeanIntegral} to obtain 
\begin{align}
& \limsup_{T\to\infty}\frac{1}{TL^2h} \,\mathbb{E}\int_0^{T}\,  \nu \| \nabla v \|^2 dt  \notag \\
\leq &\, \limsup_{T\to\infty}\frac{1}{TL^2h} \,\mathbb{E}[Y_{T}] \notag\\
= &\, \frac{4}{h}\, \left[ \frac{3}{2}  \frac{A}{B}  +   \,\frac{6}{\nu}
\frac{\sigma^2}{B}\left(\frac{A}{B}\right)^{3} \right] \, \sigma^2\notag\\
&\,+\,
\frac{4}{h}\left\{\frac{\nu}{A}\left[U^4+6U^2\Big(\frac{\sigma^2}{2\theta}\Big)+3\Big(\frac{\sigma^2}{2\theta}\Big)^2+BU^2+B\frac{\sigma^2}{2\theta}\right]\,+\,\frac{6}{\nu}\left(\frac{A}{B}\right)^{3} \frac{\sigma^2\theta}{2}\right\}.
\label{MeanUpper}
\end{align}

Finally, by \eqref{Grad_uvPhi} and (\ref{MeanUpper}), one obtains the estimate 
 \begin{align*}
\varepsilon \leq &\,  \limsup_{T\to\infty}\frac{2}{TL^2h} \,\mathbb{E}\int_0^{T}\,  \nu \| \nabla v \|^2 dt + \limsup_{T\to\infty}\frac{2}{TL^2h} \,\mathbb{E}\int_0^{T}\,  \nu \| \nabla \Phi \|^2 dt \\
\leq  &\, \frac{8}{h}\, \left[ \frac{3}{2}  \frac{A}{B}  +   \,\frac{6}{\nu}
\frac{\sigma^2}{B}\left(\frac{A}{B}\right)^{3} \right] \, \sigma^2 \\
&\quad +\,
\frac{8}{h}\left\{\frac{2\nu}{A}\left[U^4+6U^2\Big(\frac{\sigma^2}{2\theta}\Big)+3\Big(\frac{\sigma^2}{2\theta}\Big)^2+BU^2+B\frac{\sigma^2}{2\theta}\right]\,+\,\frac{6}{\nu}\left(\frac{A}{B}\right)^{3} \frac{\sigma^2\theta}{2}\right\}.
\end{align*}

Taking $A = \nu U$, $B = U^2$ (which seem to be nearly optimal), in terms of the Reynolds number  $\Rey=\frac{Uh}{\nu}$ , the above estimate can be written as,
\begin{align}\label{inRe}
\varepsilon \leq 32 \,  \frac{U^3}{h} + 2\left( 6  \, \frac{1}{\Rey} + 28 \, \frac{U}{ h \, \theta} + 12\, \frac{1}{\Rey^2} \,  \frac{h \, \theta}{U} +  24 \frac{1}{\Rey^2} \, \frac{h \, \sigma^2}{U^3} + 6\, \frac{\sigma^2}{h \, U \, \theta^2}\right)\, \sigma^2\;.
\end{align}

\begin{re}[Large noise regime]\label{altRe}
When the noise is large compared to the mean of the OU process, $U$, one might interpret our estimate in terms of the alternative characteristic velocity  $\widetilde U=\sigma/\sqrt{\theta}$ as
\begin{align*}
    \varepsilon &\lesssim \frac{1}{h}\left\{ \left[\frac{h}{\tRe}\frac{\tU}{U} + \frac{h\theta}{\tRe^2}\frac{\tU^4}{U^5}\right]\theta\tU^2 + 
    U^3+U\tU^2+\frac{\tU^4}{U} + 
    +\frac{h^2\theta^2}{\tRe^2} \frac{\tU^4}{U^3}\right\} \\
    & \sim \frac{1}{h}\left\{ U^3 + U\tU^2+\frac{\tU^4}{U}\right\}
 \quad \text{for large} \quad \tRe=\tU h/\nu\;.
\end{align*}
\end{re}

\begin{re}[Over-dissipation]\label{Remark5.1}
 If, in our analysis, we were to instead take $\X$ to be Brownian motion, i.e., $\X = W_t$,  this would result in a potential over-dissipation of the model, since,
$$  \frac{1}{T}\, \E\left[\int_0^T\,  |\X|^2 \, dt \right] = \frac{1}{T}\, \int_0^T\, \E\left[ W_t^2\right] \, dt   = \frac{1}{2} T \rightarrow \infty,  \hspace{1cm} \text{as} \hspace{1cm}  T\rightarrow \infty. $$
\end{re}

\begin{re}\label{Remark2}
If $\theta \rightarrow 0$, the estimate in \eqref{inRe} tends to infinity. Roughly speaking, this potential over-dissipation of the model is consistent with Remark \ref{Remark5.1}. This because as $\theta \rightarrow 0$,  the OU process \eqref{OU} tends to $\sigma W$ which is a Wiener process with a constant time-change.
\end{re}

 \section{Estimation of higher moments}

To estimate higher moments of 
\begin{equation*}\label{Def:timeaverageT}
\langle \epsilon\rangle_T=\frac{1}{|D|}\,\frac{1}{T}\int_{0}^{T}   \nu\|\nabla u(t ,\cdot, \omega)\|_{L^2(D)}^2  \, dt \;,
\end{equation*}
we shall need higher moments of the stationary OU process $\X$. By
Proposition \ref{OUProperties},
\begin{align}
 \E\left[ |\X|^6\right] =&U^6+15U^4\frac{\sigma^2}{2\theta}+45U^2\left(\frac{\sigma^2}{2\theta}\right)^2+15\left(\frac{\sigma^2}{2\theta}\right)^3, \label{m3}\\
\E\left[ |\X|^8\right] =&U^8+28U^6\frac{\sigma^2}{2\theta}+210U^4\left(\frac{\sigma^2}{2\theta}\right)^2 
  +420U^2\left(\frac{\sigma^2}{2\theta}\right)^3
  +105\left(\frac{\sigma^2}{2\theta}\right)^4.\label{m4}
\end{align}
More generally, for all integer $k\geq 1$, $\E\left[ |\X|^{2k}\right]=U^{2k}+P_k(U^2,\,\sigma^2/(2\theta))$for some polynomial $P_k$.


By \eqref{Grad_uvPhi}, for all $p\in[1,\infty)$,
\begin{align}
\mathbb{E}\left[\left|\int_0^T\,  \| \nabla u \|^2 dt\right|^p\right]  
\leq &\, 2^p \,\mathbb{E}\left[\left|\int_0^T\,   \| \nabla v \|^2 +\|\nabla \Phi\|^2 dt\right|^p\right] \notag\\
\leq &\, 4^p\,\left(\mathbb{E}\left[\left|\int_0^T\,  \| \nabla v \|^2 dt\right|^p\right]+\mathbb{E}\left[\left|\int_0^T\,  \| \nabla \Phi \|^2 dt\right|^p\right]\right). \label{Var_u}
\end{align}

To bound the second term on the right of \eqref{Var_u},  from H\"older's inequality we have 
\begin{align}
\mathbb{E}\left[\left|\int_0^T\,  \| \nabla \Phi \|^2 dt\right|^p\right]  \leq &\,
T^{p-1}\,\mathbb{E}\left[\int_0^T\,  \| \nabla \Phi \|^{2p} dt\right]. \label{p_Holder}
\end{align}

We shall focus on the case $p=2$, even though our estimates below can be extended to any $p\in [1,\infty)$. From \eqref{p_Holder} and \eqref{Gradphi2}
\begin{align} \label{GradPhi_2nd}
&\mathbb{E}\left[\left|\int_0^T\,  \| \nabla \Phi \|^2 dt\right|^2\right]  \leq 
T\,\mathbb{E}\left[\int_0^T\,  \| \nabla \Phi \|^{4} dt\right] 
\leq \frac{2\,T^2\,L^4}{A^2}\,\mathbb{E}\left[  \X^8+B^2\X^4\right]
\end{align}
which can be computed explicitly using the moment formulas \eqref{m2} and \eqref{m4} for the OU process.

For the first term on the right of \eqref{Var_u}, we write
\begin{equation}\label{Def:E_T}
\mathcal{E}_T:=  \int_0^T\,  \nu \| \nabla v \|^2 dt.
\end{equation}
Lemma \ref{L:Eq6b} asserts that
\begin{equation}\label{Eq6b1}
\mathcal{E}_T \leq  2  \| v(0) \|^2 + Y_T +  |M_T|.
\end{equation}
Hence
\begin{equation}\label{Eq6b2}
\mathbb{E}\left[\left|\mathcal{E}_T\right|^2\right]  \leq  3\,\mathbb{E}\left[4  \| v(0) \|^2 + |Y_T|^2 +  |M_T|^2\right].
\end{equation}

\subsection{Bounding $\mathbb{E}[M_T^2]$}
By Lemma \ref{L:mtg}, Jensen's inequality and then Young's inequality, we obtain
\begin{align}\label{QuadM_t2}
    3\mathbb{E}[M_T^2]=3\mathbb{E}[[M]_T] \leq &
   \alpha\,  \mathbb{E}[\mathcal{E}_T] \notag\\
    \leq & \alpha\sqrt{\mathbb{E}[|\mathcal{E}_T|^2]} \leq \frac{\alpha^2}{2}+\frac{\mathbb{E}[|\mathcal{E}_T|^2]}{2}
\end{align}
where 
$\alpha= \frac{9\sigma^2L^2}{\nu}\left(\frac{A}{B}\right)^{3}$ has the same dimension as that of $\mathcal{E}_T$ when we choose $A=\nu U$ and $B=U^2$.

\subsection{Bounding $\mathbb{E}[|Y_T|^2]$}
We apply the elementary inequality $(a+b)^2\leq 2(a^2+b^2)$ and the Cauchy-Schwarz inequality to \eqref{Y_t} and to obtain 
\begin{align}\label{Y_t2}
|Y_T|^2 \leq & \, 
32\,L^4\,T^2\,\left[  \frac{3}{2}\, \frac{A}{B} + \frac{6}{\nu}\, \left(\frac{A}{B}\right)^3 \, \frac{\sigma^2}{B}\right]^2 \, \sigma^4 \notag\\
&\,+\, 32\,L^4\,T\, \int_0^T\left( \nu  \frac{|\X|^2}{\delta_t}  +   \,\frac{6}{\nu}\left(\frac{A}{B}\right)^{3} \,\theta^2\, |U - \X|^2  \right)^2\, dt.
\end{align}
Applying  $(a+b)^2\leq 2(a^2+b^2)$ again, the integrand in the second term is bounded above by
\begin{align*}
\left( \nu  \frac{|\X|^2}{\delta_t}  +   \,\frac{6}{\nu}\left(\frac{A}{B}\right)^{3} \,\theta^2\, |U - \X|^2  \right)^2 
\leq&  2 \left(
 \nu^2  \frac{(\X^4+B\X^2)^2}{A^2}  +   \,\frac{36}{\nu^2}\left(\frac{A}{B}\right)^{6} \,\theta^4\, |U - \X|^4 \right)\\
\leq & 
 \frac{4\nu^2}{A^2}  (\X^8+B^2\X^4)  +   \,\frac{72}{\nu^2}\left(\frac{A}{B}\right)^{6} \,\theta^4\, |U - \X|^4 
\end{align*}

Hence
\begin{align}
\mathbb{E}[|Y_T|^2] \leq & \,
32\,L^4\,T^2\,\left[  \frac{3}{2}\, \frac{A}{B} + \frac{6}{\nu}\, \left(\frac{A}{B}\right)^3 \, \frac{\sigma^2}{B}\right]^2 \, \sigma^4 \notag\\
&\,+\, 32\,L^4\,T^2\,\left(
 \frac{4\nu^2}{A^2}  \mathbb{E}[\X^8+B^2\X^4]  +   \,\frac{72}{\nu^2}\left(\frac{A}{B}\right)^{6} \,\theta^4\,\mathbb{E}[ |U - \X|^4 ]
\right). \label{Y_t3}
\end{align}

\subsection{Summarizing}
Putting \eqref{QuadM_t2} into \eqref{Eq6b2}, we obtain
\begin{align*}
\mathbb{E}[|\mathcal{E}_T|^2]  \leq &  12\,\mathbb{E}\left[\| v(0) \|^2\right] + \left(\frac{\alpha^2}{2}+\frac{\mathbb{E}[|\mathcal{E}_T|^2]}{2}\right)\,+\,3\mathbb{E}[|Y_T|^2].
\end{align*}
Rearranging terms gives
\begin{align}\label{Eq6b4}
\mathbb{E}[|\mathcal{E}_T|^2]  \leq &  24\,\mathbb{E}\left[\| v(0) \|^2\right] + \alpha^2\,+\,6\mathbb{E}[|Y_T|^2].
\end{align}
Combining \eqref{Eq6b4} with \eqref{Var_u} (with $p=2$) gives
\begin{align*}
\mathbb{E}\left[\left|\int_0^T\,  \nu \| \nabla u \|^2 dt\right|^2\right]  
\leq &\, 16\,\left(\mathbb{E}\left[|\mathcal{E}_T|^2\right]+\mathbb{E}\left[\left|\int_0^T\,  \nu \| \nabla \Phi \|^2 dt\right|^2\right]\right)\\
\leq &\,384 \,\mathbb{E}[\|v(0)\|^2] +16\alpha^2 +96\,\mathbb{E}[|Y_T|^2] +16 \,\mathbb{E}\left[\left|\int_0^T\,  \nu \| \nabla \Phi \|^2 dt\right|^2\right].
\end{align*}
Hence using
\eqref{GradPhi_2nd} and \eqref{Y_t3}, and recalling $|D|=L^2 h$, we have
\begin{equation}\label{varianceboundexplicit}
\begin{aligned}
\limsup\limits_{T\rightarrow\infty}\mathbb{E}[\langle \epsilon\rangle_T^2] 
\,\leq &\, 16 \limsup_{T\to\infty}\frac{\mathbb{E}[|\mathcal{E}_T|^2]}{|D|^2T^2}  + 16 \limsup_{T\to\infty}\frac{1}{|D|^2T^2} \mathbb{E}\left[\left|\int_0^T\,  \nu \| \nabla \Phi \|^2 dt\right|^2\right]\\
\leq &\,\frac{96}{|D|^2}\bigg\{
32\,L^4\,\left[  \frac{3}{2}\, \frac{A}{B} + \frac{6}{\nu}\, \left(\frac{A}{B}\right)^3 \, \frac{\sigma^2}{B}\right]^2 \, \sigma^4 \notag\\
&\qquad \,+\, 32\,L^4\,\left(
 \frac{4\nu^2}{A^2}  \mathbb{E}[\X^8+B^2\X^4]  +   \,\frac{72}{\nu^2}\left(\frac{A}{B}\right)^{6} \,\theta^4\,\mathbb{E}[ |U - \X|^4 ]
\right) \bigg\}\\
&\,+\frac{16 \nu^2}{|D|^2}\,\frac{2\,L^4}{A^2}\,\mathbb{E}\left[  \X^8+B^2\X^4\right]\\
\leq &\,\frac{3072}{h^2}\bigg\{
\left[  \frac{3}{2}\, \frac{A}{B} + \frac{6}{\nu}\, \left(\frac{A}{B}\right)^3 \, \frac{\sigma^2}{B}\right]^2 \, \sigma^4 \notag\\
&\qquad \,+\, \left(
 \frac{4\nu^2}{A^2}  \mathbb{E}[\X^8+B^2\X^4]  +   \,\frac{72}{\nu^2}\left(\frac{A}{B}\right)^{6} \,\theta^4\,\mathbb{E}[ |U - \X|^4 ]
\right) \bigg\}\\
&\,+\frac{32 \nu^2}{h^2 A^2}\,\mathbb{E}\left[  \X^8+B^2\X^4\right]\\
\leq &\,\frac{3072}{h^2}\bigg\{
\left[  \frac{3}{2}\, \frac{A}{B} + \frac{6}{\nu}\, \left(\frac{A}{B}\right)^3 \, \frac{\sigma^2}{B}\right]^2 \, \sigma^4 \,+\, 
  \frac{72}{\nu^2}\left(\frac{A}{B}\right)^{6} \,\theta^4\,\mathbb{E}[ |U - \X|^4 ]
\bigg\}\\
&\,+\frac{12320 \nu^2}{h^2 A^2}\,\mathbb{E}\left[  \X^8+B^2\X^4\right].
\end{aligned}
\end{equation}
Now applying the moment formulas \eqref{m2}, \eqref{m4} and setting $A=\nu U$ and $B=U^2$, the above upper bound is
\begin{equation}
\begin{aligned}
&\,\limsup\limits_{T\rightarrow\infty}\mathbb{E}[\langle \epsilon\rangle_T^2] \\
\leq \,&\frac{3072}{h^2}\bigg\{
\left[  \frac{3}{2}\, \frac{A}{B} + \frac{6}{\nu}\, \left(\frac{A}{B}\right)^3 \, \frac{\sigma^2}{B}\right]^2 \, \sigma^4 \,+\, 
  \frac{216}{\nu^2}\left(\frac{A}{B}\right)^{6} \,\theta^4\,\Big(\frac{\sigma^2}{2\theta}\Big)^2
\bigg\}\\
&\,+\frac{12320 \nu^2}{h^2 A^2}\,
\bigg\{
\left[U^8+28U^6\frac{\sigma^2}{2\theta}+210U^4\left(\frac{\sigma^2}{2\theta}\right)^2 
  +420U^2\left(\frac{\sigma^2}{2\theta}\right)^3
  +105\left(\frac{\sigma^2}{2\theta}\right)^4\right]\\
 & \qquad\qquad\qquad + B^2\left[ U^4+6U^2\Big(\frac{\sigma^2}{2\theta}\Big)+3\Big(\frac{\sigma^2}{2\theta}\Big)^2\right]
\bigg\}\\
\leq \,&\frac{3072}{h^2}\bigg\{
\left[  \frac{3}{2}\, \frac{\nu}{U} + \frac{6\sigma^2 \nu^2}{U^5}\,\right]^2 \, \sigma^4 \,+\, 
  \frac{216 \nu^4}{U^6} \,\theta^4\,\Big(\frac{\sigma^2}{2\theta}\Big)^2
\bigg\}\\
&\,+\frac{12320}{h^2 U^2}\,
\bigg\{
\left[2U^8+34U^6\frac{\sigma^2}{2\theta}+213U^4\left(\frac{\sigma^2}{2\theta}\right)^2 
  +420U^2\left(\frac{\sigma^2}{2\theta}\right)^3
  +105\left(\frac{\sigma^2}{2\theta}\right)^4\right]\\
= \,& \frac{24640 U^6}{h^2} + \sigma^2 \,P_{U,\nu,\theta}(\sigma)
\end{aligned}
\end{equation}
where $P_{U,\nu,\theta}(\sigma)$ is an explicit polynomial in $\sigma$ whose coefficients are explicit functions of  $U,\nu,\theta$.

\begin{re}[Higher moments]\label{Rk:higher}
One can readily obtain estimates for higher moments  by following our method. Note that 
\eqref{Var_u} and \eqref{p_Holder} still hold, and 
for all integer $k\geq 1$, $\E\left[ |\X|^{2k}\right]=U^{2k}+P_k(U^2,\,\sigma^2/(2\theta))$ for some polynomial $P_k$. For the martingale term
\eqref{QuadM_t2},
one can apply the  Burkholder-Davis-Gundy Inequality. We expect that for all integer $k\geq 1$, the $2k$-th moment  of $\langle \epsilon\rangle_T$ satisfies
\begin{equation}
\begin{aligned}\label{Higherbound}
\limsup\limits_{T\rightarrow\infty}\mathbb{E}[\langle \epsilon\rangle_T^{2k}]   
\lesssim
\frac{U^{6k}}{h^{2k}} +  \sigma^2 \,P(\sigma)
\end{aligned}
\end{equation}
where $P(\sigma)=P_{U,\nu,\theta}(\sigma)$ is an explicit polynomial in $\sigma$ whose coefficients are explicit functions of $U,\nu,$ and $\theta$.
\end{re}

\section{Conclusion and Commentary}

In this paper we have derived uniform (in $T$) bounds for both the mean and the second moment of the energy dissipation rate for solutions of the incompressible  Navier--Stokes equations with a boundary wall moving as a stationary Ornstein--Uhlenbeck process.  As the variance of the OU process tends to 0, we recover an upper bound for the deterministic case as in \cite{DC92}.  A similar argument can be used to find higher moment bounds. A novelty of our method is the construction of a carefully chosen stochastic background flow $\Phi$ that depends on the stochastic forcing, as indicated in \eqref{delta}. Our technique can be readily generalized to obtain bounds for higher moments and to the case where the OU process is replaced by a gradient system of the form
\begin{equation}\label{gradientSDE}
dX_t=-\nabla h(X_t)\,dt +\sigma\,dW_t,
\end{equation}
where  $h$ is a function and $\sigma\in \mathbb{R}$. The OU process \eqref{OU} is the case where $h(x)=-\theta(x-U)^2/2$. It is well-known that if $$Z^{(\sigma)}:=\int_{\mathbb{R}} \exp{ \left( \frac{-2}{\sigma^2} h (x) \right)} \,dx <\infty\;,$$ then the
1-dimensional gradient system \eqref{gradientSDE}
has a unique invariant distribution given by the Gibbs measure 
\begin{equation}\label{Gibbs_1Dim}
    \frac{1}{Z^{(\sigma)}}\exp{ \left( \frac{-2}{\sigma^2} h (x) \right)} .
\end{equation}
The analysis herein would allow for over-dissipation of the model if the noise at the boundary were taken to be the Wiener process, as noted in Remarks \ref{Remark5.1} and \ref{Remark2}. 

Finally,  it was crucial to take the limit superior in time {\it after} the expectation.  Our estimate does not provide a bound when the operations are taken in the reverse order. It remains to find a bound in the latter case, or quantify the difference in the two expressions describing the rate of dissipation.
\section{Acknowledgments}
The work of M. Jolly was supported in part by  NSF grant DMS-1818754.
W.T. Fan is partially supported by NSF grant DMS-1804492 and ONR grant TCRI N00014-19-S-B001.
\section{Appendix}
Proof of Lemma \ref{L:BoundvG}: we first write $v_1(x_1, x_2, x_3)$ as $\int_0^{x_3} \frac{\partial v_1}{\partial \zeta} (x_1, x_2, \zeta) \, d\zeta$, and  apply the Cauchy-Schwarz inequality  to obtain 
\begin{equation}\label{term2p}
\begin{aligned}
\left|\int_{D_\delta} v_1 \,G_t\, dx\right|
&= \left| \int_0^L \int_0^L  \int_0^{\delta_t} G_t \,  v_1\,  dx_3 dx_2 dx_1 \right|\\
& = \left| \int_0^L \int_0^L  \int_0^{\delta_t} G_t \, \left( \int_0^{x_3} \frac{\partial v_1}{\partial \eta} (x_1, x_2, \eta) d \eta \right)\,  dx_3 dx_2 dx_1 \right|\\
& = \left| \int_0^L \int_0^L  \int_0^{\delta_t}  \int_0^{x_3} G_t \frac{\partial v_1}{\partial \eta} (x_1, x_2, \eta) \,  d \eta   dx_3  dx_2  dx_1 \right|\\
& \leq \left(  \int_0^L \int_0^L  \int_0^{\delta_t}  \int_0^{x_3} \left|G_t\right|^2  \,  d \eta   dx_3  dx_2  dx_1\right)^{\frac{1}{2}} \, \left(  \int_0^L \int_0^L  \int_0^{\delta_t}  \int_0^{x_3}  \left|\frac{\partial v_1}{\partial \eta}\right|^2 \,  d \eta   dx_3  dx_2  dx_1\right)^{\frac{1}{2}}
\end{aligned}
\end{equation}

Now we estimate the terms on the right hand side of (\ref{term2p}) as, 
\begin{equation*}
\begin{aligned}
\left(  \int_0^L \int_0^L  \int_0^{\delta_t}  \left( \int_0^{x_3} \left|G_t\right|^2  \,  d \eta \right)  dx_3  dx_2  dx_1\right)^{\frac{1}{2}}  & \leq \left(  \int_0^L \int_0^L  \int_0^{\delta_t} \left(  \left|G_t\right|^2 \int_0^{x_3} 1   \,  d \eta  \right) dx_3  dx_2  dx_1\right)^{\frac{1}{2}} \\
& =  L\,\left(  \int_0^{\delta_t}   \left|G_t\right|^2 \,x_3 \, dx_3  \right)^{\frac{1}{2}} \\
& \leq \delta_t^{\frac{1}{2}} \, L \,  \left(    \int_0^{\delta_t}  \left|G_t\right|^2   dx_3  \right)^{\frac{1}{2}}
\end{aligned}
\end{equation*}
and, 
\begin{equation*}
\begin{aligned}
\left(  \int_0^L \int_0^L  \int_0^{\delta_t}  \int_0^{x_3}  \left|\frac{\partial v_1}{\partial \eta}\right|^2 \,  d \eta   dx_3  dx_2  dx_1\right)^{\frac{1}{2}} & =   \left(  \int_0^{\delta_t} \int_0^L  \int_0^L  \int_0^{x_3}  \left|\frac{\partial v_1}{\partial \eta}\right|^2 \,  d \eta   dx_2  dx_1  dx_3\right)^{\frac{1}{2}}\\
& \leq \left(  \int_0^{\delta_t} \int_0^L  \int_0^L  \int_0^{\delta_t}  \left|\frac{\partial v_1}{\partial \eta}\right|^2 \,  d \eta   dx_2  dx_1  dx_3\right)^{\frac{1}{2}}\\
& \leq \delta_t^{\frac{1}{2}}\, \|\nabla v \|. 
\end{aligned}
\end{equation*}
Plugging the above two estimates in (\ref{term2p}) yields the desired inequality.

\end{document}